\documentclass{amsart}
\usepackage{tikz}
\usetikzlibrary{arrows.meta,decorations.pathreplacing}
\usepackage{xcolor}
\usepackage{amssymb,latexsym,amsmath,extarrows}
\usepackage{graphicx,mathrsfs}
\usepackage{hyperref,url}
\numberwithin{equation}{section}

\newtheorem{theorem}{Theorem}[section]
\newtheorem{lemma}[theorem]{Lemma}

\newtheorem{proposition}[theorem]{Proposition}

\newtheorem{definition}[theorem]{Definition}
\newtheorem{corollary}[theorem]{Corollary}

\newtheorem{conjecture}[theorem]{Conjecture}

\newcommand{\de}{\delta}

\newcommand{\De}{\Delta}
\newcommand{\e}{\epsilon}

\newcommand{\vp}{\varphi}

\newcommand{\wh}{\widehat}

\newcommand{\ZR}{\mathbb{R}}
\newcommand{\R}{\mathbb{R}}

\newcommand{\ZQ}{\mathbb{Q}}
\newcommand{\ZZ}{\mathbb{Z}}

\newcommand{\ZN}{\mathbb{N}}
\newcommand{\ZS}{\mathbb{S}}

\newcommand{\Id}{{\bf 1}}

\newcommand{\gap}{\operatorname{gap}}

\newcommand{\cC}{{\mathcal C}}

\newcommand{\supp}{{\rm supp}}

\newcommand{\dist}{{\rm{dist}}}

\begin{document}

\title[Three-fold additive energy bound for points on convex curves]{Near optimal three-fold additive energy bound for points on convex curves}

\author{Adam Cushman} \address{ Adam Cushman\\  Department of Mathematics\\ Indiana University Bloomington, USA} \email{acushma@iu.edu}

\author{Ciprian Demeter} \address{ Ciprian Demeter\\  Department of Mathematics\\ Indiana University Bloomington, USA} \email{demeterc@iu.edu}

\author{Shukun Wu} \address{ Shukun Wu\\  Department of Mathematics\\ Indiana University Bloomington, USA} \email{shukwu@iu.edu}

\begin{abstract}
Let $X\subset\R$ be finite and let $\gamma(t)=(t,f(t))$, where $f$ is strictly convex. 
We show that
\[
    J_3(\gamma(X))
    =\#\{(x_1,\ldots,x_6)\in X^6:\sum_{i=1}^3\gamma(x_i)=\sum_{i=4}^6\gamma(x_i)\}
    \ll_{\epsilon}|X|^{3+\epsilon}.
\]
When specialized to the parabola, our result implies near-optimal estimates for the number of solutions to the diameter-free quadratic Vinogradov system.
As a second application, we settle a conjecture from  \cite{Krishnapur-Kurlberg-Wigman}, \cite{Bombieri-Bourgain} concerning lattice points on dilates of the unit circle. 
As a third application, we
prove that $|A-A|\gg_\e|A|^{5/3-\e}$ and $|A+A|\gg_\e|A|^{8/5-\e}$ for any finite convex sequence $A\subset \R$. 

\end{abstract}

\maketitle

\section{Introduction}

A function $f:\ZR\to\ZR$ is called \emph{strictly convex} if for every pair of
distinct points $x,y\in \ZR$ and every $\lambda\in(0,1)$,
\[
f\bigl(\lambda x+(1-\lambda)y\bigr)<
\lambda f(x)+(1-\lambda)f(y).
\]
Given a strictly convex function $f$, define the strictly convex curve
\[
   \gamma(t):=(t,f(t)), \qquad t\in\ZR.
\]
For a finite $Y\subset \ZR^n$ and an integer $k\ge 1$, we consider the $k$-fold energy
\begin{equation}
\label{J-k}
    J_k(Y)
    :=\#\{(y_1,\ldots,y_{2k})\in Y^{2k}:\sum_{i=1}^{k}y_i=\sum_{i=k+1}^{2k}y_i\}.
\end{equation}

\medskip 

We study the upper bounds for $J_3(\gamma(X))$, where $X\subset\ZR$ is finite.
Note that the diagonal solutions already contribute to $J_3(\gamma(X))$ on the order of $|X|^3$, so the natural target is a cubic bound up to a factor of $|X|^\epsilon$ loss. 
It should be noted that at least a $\log |X|$ loss is needed when considering lattice points on the parabola. 
The sharp form of this loss may depend on $\gamma$.

\begin{theorem}
    \label{thm:main}
    For every $\epsilon>0$, there exists a constant $C_\e$  such that 
    for every finite $X\subset \ZR$ and each strictly convex $\gamma$ we have
    \begin{equation}
    \label{eq:main}
        J_3(\gamma(X))\leq C_\e|X|^{3+\epsilon}.
    \end{equation}
\end{theorem}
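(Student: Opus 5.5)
One natural route is to convert $J_3$ into an incidence-type count and then estimate it with a multi-scale induction. Write $J_3(\gamma(X))=\sum_{(s,h)}r_3(s,h)^2$, where $r_3(s,h)=\#\{(x_1,x_2,x_3)\in X^3: x_1+x_2+x_3=s,\ f(x_1)+f(x_2)+f(x_3)=h\}$. Equivalently, $J_3(\gamma(X))$ counts pairs of triples with the same sum and the same $f$-sum. I would split each triple into a single point and a pair. Fix $x_3$ and $x_6$. The condition becomes that the pair sums $\gamma(x_1)+\gamma(x_2)$ and $\gamma(x_4)+\gamma(x_5)$ differ by the fixed vector $\gamma(x_6)-\gamma(x_3)$. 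So
\[
J_3(\gamma(X))=\sum_{v}E(v)\,\#\{(x_3,x_6):\gamma(x_6)-\gamma(x_3)=v\},
\]
where $E(v)=\#\{(a,b)\in (2\gamma(X))^2 : a-b=v\}$ and $2\gamma(X)$ is counted with multiplicity. Since $\gamma$ is strictly convex, each nonzero difference $v$ is realized at most once. The case $v=0$ gives $|X|\cdot J_2(\gamma(X))\ll |X|^3$, because two-fold energy on a convex curve is $O(|X|^2)$. The remaining task is to bound $\sum_{v\in D}E(v)$, where $D$ is the set of nonzero differences. This is a sum of $|X|^2$ values of the energy function of the multiset $2\gamma(X)$.

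Next I would dyadically decompose by the multiplicity $r_2(a)$ of pair sums and by the size of $E(v)$. The pair sums $\gamma(x)+\gamma(y)$ lie on the translated curves $\gamma(x)+\gamma(X)$, which are pseudo-lines in the sense that two translates of a strictly convex curve meet in at most two points. Szemerédi--Trotter for pseudo-lines (Pach--Sharir) then controls rich pair sums: $\#\{a:r_2(a)\ge k\}\ll |X|^4/k^3+|X|^2/k$. Feeding this in gives the classical bound $J_3\ll |X|^{7/2}$, which by itself is not enough.

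The key step is therefore an induction on scales, in the spirit of decoupling or Bourgain--Demeter type arguments. Partition $X$ into $K$ consecutive blocks $X_1,\dots,X_K$ of equal cardinality. The contribution from sextuples whose elements meet at least three well-separated blocks should be handled by a transversality or trilinear-type incidence bound. Here strict convexity makes the relevant curve families genuinely transverse, and I hope this gives $K^{O(1)}|X|^3$. The remaining near-diagonal contributions involve sextuples concentrated in one or two blocks. For these I would apply the induction hypothesis to $X_j$ (of size $|X|/K$) with an affine rescaling, since the property of being a convex curve is preserved, or use Cauchy--Schwarz to reduce to $J_3$ of smaller blocks. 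This yields a recursion $J(N)\le C K^{3}J(N/K)+C_K N^3$. Choosing $K$ large in terms of $\epsilon$ closes the induction with loss $N^{\epsilon}$, provided the constant in front of $J(N/K)$ is at most $K^{3}$ times a constant independent of $K$.

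I expect the main obstacle to be the multilinear (transverse) estimate. The pair sums $\gamma(x)+\gamma(y)$ do not lie on a convex curve, and there is no lattice or height structure to exploit. So proving that three separated blocks contribute only $|X|^{3+o(1)}$ needs a genuinely new incidence input, for instance a sharp bound for incidences between points and a two-parameter family of translated convex curves, proved by polynomial partitioning or crossing-number arguments. My first attempt would be to reduce it to counting quadruples with $\gamma(x_1)+\gamma(x_2)-\gamma(x_4)-\gamma(x_5)=v$, for $v$ in the difference set of a separated block, and to exploit the fact that separation forces these difference curves to cross transversally.
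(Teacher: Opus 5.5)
Your opening reduction is correct: by strict convexity each nonzero chord $\gamma(x_6)-\gamma(x_3)$ occurs once. The recursion you aim for also has the right shape. But the entire content of the theorem sits in the step you leave open, namely bounding the sextuples that meet three well-separated blocks by $K^{O(1)}|X|^3$. You give no argument for it, and the mechanism you suggest is not available here:
\begin{itemize}
\item The blocks are defined by cardinality, $X$ has no spacing, and the constant must be uniform over all strictly convex $\gamma$. So convexity gives only qualitative transversality, and there is no scale at which an uncertainty-principle or bilinear-Kakeya argument can run.
\item Szemer\'edi--Trotter-type incidence bounds for translates of a convex curve are what produce $7/2$. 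Even combined with the ordering input described below, they do not reach the cubic exponent.
\item In the lossless form you state, the estimate is very likely false. For $X=\{1,\dots,N\}$ on the parabola, $J_3\asymp N^3\log N$. Heuristically, the logarithm comes from pairs of representations in general position on the circles $\{\sum x_i=s,\ \sum x_i^2=q\}$, so configurations meeting three separated blocks should carry it too.
\end{itemize}
Your broad/narrow split is also organized around the wrong kind of locality. A sextuple with $x,x'$ in one block, $y,y'$ in a distant block and $z,z'$ in a third is ``broad'' for you, yet these are exactly the configurations the paper handles by induction.

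The missing idea is order. Suppose $\gamma(x)+\gamma(y)+\gamma(z)=\gamma(x')+\gamma(y')+\gamma(z')$ with $x<y<z$, $x'<y'<z'$ and $x<x'$. Then Karamata's inequality forces $x<x'<y'<y<z<z'$. So the $n$ representations of a point $p$ form a single interlaced chain $x_1<\dots<x_n<y_n<\dots<y_1<z_1<\dots<z_n$.

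Ranks telescope along each of the three monotone sequences. Hence if $n\gg N/D$, a proportion $\gg D/N$ of the pairs of representations have all three paired rank gaps at most $D$. Points with $n\ll N/D$ contribute $\ll N^4/D$ in total. This gives $J(N)\ll N^4/D+(N/D)T(N,D)$, where $T$ counts only sextuples $x<x'<y'<y<z<z'$ with $\operatorname{gap}(x,x'),\operatorname{gap}(y,y'),\operatorname{gap}(z,z')\le D$. Spread-out configurations are never estimated directly.

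This $T$ is bounded by $\int G^3$ with $G=\sum_j\bigl(|F_{X_j}|^2-|X_j||\varphi|^2\bigr)$, where $X_j$ is a union of two consecutive rank blocks of size $D$. Each summand is Fourier supported in a cone around the chord directions of $X_j$. These cones overlap boundedly because chord slopes increase from block to block. $L^2$ orthogonality, $L^p$ bounds for cone multipliers and interpolation then give $T(N,D)\ll_\eta (N/D)^{2+\eta}J(D)$, with no incidence theorem at all. With $D=N/K$ this reads $J(N)\ll KN^3+C_\eta K^{3+\eta}J(N/K)$, which closes exactly as you describe once $\eta<\epsilon$.
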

The implicit constant in \eqref{eq:main} is independent of $\gamma$. 
This may at first come as a surprise to researchers in harmonic analysis. 
However, unlike the typical work in multi-scale analysis, our discrete arguments make only qualitative, rather than quantitative, use of curvature. 
While $\gamma$-independence is a pleasant artifact of our argument, it also proves essential in our application to convex sequences.  
See Section \ref{section-sumset-convex}.

The best-known exponent of $|X|$ in \eqref{eq:main} prior to our work was $7/2$. This was proved in \cite{Bombieri-Bourgain} for the circle, in \cite{Bourgain-Demeter} for the parabola, and in \cite{Mudgal} for a wider class of curves. All these arguments rely crucially on variants of the  Szemer\'edi-Trotter incidence theorem.

\medskip

When the points in $X$ are $\delta$-separated, for $f\in C^2$ with $|f'|=O(1)$ and $|f''|\sim1$, \eqref{eq:main} was proved in \cite{Bourgain-Demeter} with an implicit constant that involves an additional $\delta^{-\e}$ dependence. 
This was a particular case of a more general estimate proved in \cite{Bourgain-Demeter}, called Fourier decoupling. 
The proof of the latter involved multi-scale analysis in both the spatial and frequency domains. 
The uncertainty principle and the bilinear Kakeya inequality played a crucial role. 
The key scale parameter in \cite{Bourgain-Demeter} was the diameter of arcs on the curve. 
In our argument for Theorem \ref{thm:main}, the role of the `scale parameter' is played by the number of points of $\gamma(X)$ contained in the arc. 
See \eqref{partition}.     

The point of the present proof is different: the interlacing geometry of equal triple sums is turned into a recurrence, and that recurrence is iterated all the way to the near-cubic exponent. 
Order, rather than scale or incidence geometry, is the key input of the new argument (see \eqref{order}). 
There are some vague similarities between our argument here and that in \cite{Bourgain-Demeter}, and these include the use of $L^2$ orthogonality and bootstrapping.  

\medskip 

The generality of \eqref{eq:main} may come as a surprise, at least for two different reasons.  
First, it was observed in \cite{Bourgain-Demeter} that Theorem \ref{thm:main} for the circle would be a consequence of the Unit Distance Conjecture. 
The recent disproof of the latter has cast doubt on the validity of the former.

Second, prior to the decoupling approach introduced in \cite{Bourgain-Demeter}, it was believed that proving \eqref{eq:main} when $\gamma(X)$ consists of well-spaced lattice points must involve some number-theoretic input. 
One example to have in mind is that of the lattice points on the parabola. 
The purely Fourier analytic approach in \cite{Bourgain-Demeter} showed that such input is in fact not necessary. 
However, the applicability of decoupling is restricted to the case when points in $X$ are ``well-spaced". 
In the context of lattice points, this roughly means that the diameter 
\[ 
    \operatorname{diam}(X):=\max_{x,y\in X}\dist(x,y)
\]
is at most $|X|^{O(1)}$. 
Indeed, after rescaling $X$ to lie inside $[0,1]$, the new points remain at least $\delta=\operatorname{diam}(X)^{-1}$-separated. 
Since decoupling produces a $\delta^{-\e}$-loss, well-spacedness guarantees this loss is within the acceptable $O(|X|^\e)$ margin of error. This argument is efficient if one considers consecutive lattice points on the parabola, but it loses its strength for sparse subsets. 
See Section \ref{Quadratic-Diameter-VMVT-section}.

In the current paper, the well-spaced restriction is removed.
This is crucial to our second application, as the lattice points $P_m$ on the circle  $\sqrt{m}\ZS^1$ are not well-spaced for certain values of $m$. Indeed, it is known that $|P_m|\ll_\e m^\e$, and that $\operatorname{diam}(P_m)\sim \sqrt{m}$ for infinitely many $m$. For comparison, 
Corollary \ref{thm:maincircle} delivers the estimate $J_3(P_m)\ll_\e |P_m|^{3+\e}$, stronger than the estimate $J_3(P_m)\ll_\e |P_m|^{3}m^{\e}$ given by decoupling. 
See Section \ref{section-energy-circles}.

\medskip 

Theorem \ref{thm:main} may be viewed as the beginning of a potentially broad new program, geared towards obtaining exponential sum estimates beyond the reach of decoupling. 
There are multiple avenues to explore, and the potential for further applications to questions in number theory and spectral theory is significant.

\bigskip 

\subsection{Applications}
We present three applications of our result.    

\medskip 

\subsubsection{Quadratic diameter-free Vinogradov system}
\label{Quadratic-Diameter-VMVT-section} 

We recall the following conjecture, stated explicitly in \cite{Mudgal-2020, Wooley-diameter}.

\begin{conjecture}
\label{diameter-free-VMVT}
Let $n, k\geq 2$ be integers.
Suppose that $\Gamma(t)$ is the moment curve $(t, t^2, \ldots, t^n)$ and $X\subset\ZZ$ is finite with $|X|\geq2$. 
Then
\begin{equation}
\label{diameter-free-esti}
    J_k(\Gamma(X))
    \ll_\epsilon |X|^{k+\e}+
    |X|^{2k-\frac{n(n+1)}{2}+\epsilon}.
\end{equation}
\end{conjecture}
When $X=\{1,\ldots,|X|\}$, Conjecture \ref{diameter-free-VMVT} reduces to the Vinogradov mean value theorem, which was proved by two different methods, efficient congruencing and decoupling, in \cite{Wooley-cubic, Wooley-VMVT} and \cite{BDG} respectively. 
In fact, both methods yield the stronger estimate
\[
    J_k(\Gamma(X))
    \ll_\epsilon
    \operatorname{diam}(X)^\epsilon |X|^k
    +
    \operatorname{diam}(X)^\epsilon |X|^{2k-\frac{n(n+1)}{2}}.
\] 
Clearly $|X|\ll \operatorname{diam}(X)$, and since there is no loss of a factor of $\operatorname{diam}(X)^\varepsilon$ in \eqref{diameter-free-esti}, Conjecture \ref{diameter-free-VMVT} may be referred to as the \emph{diameter-free Vinogradov mean value conjecture}.    

When $n=2$, Mudgal \cite{Mudgal} used sophisticated tools from additive combinatorics to prove that $J_k(\Gamma(X))\ll |X|^{2k-3+c_k}$ for some $c_k>0$.
This improves the exponent obtained by trivial interpolation using the three-fold energy result $J_3(\Gamma(X))\ll|X|^{7/2}$ and makes progress towards Conjecture \ref{diameter-free-VMVT}.
Wooley \cite{Wooley-diameter}, on the other hand, investigated this conjecture by attempting to transform the set $X$ into another set with diameter $|X|^{O(1)}$, while preserving the solution count in 
\eqref{diameter-free-esti}.

\medskip  

Via interpolation, Theorem \ref{thm:main} implies that  
\begin{equation}
    \label{Jk}
    J_k(\gamma(X))
    \ll_\epsilon
    |X|^{2k-3+\epsilon},
    \qquad k\geq 4.
\end{equation} 
When specialized to the parabola, this confirms Conjecture \ref{diameter-free-VMVT} when $n=2$ and its more general form in \cite[Question 2.13]{Bourgain-Demeter}.
Moreover, at least two families of lattice examples show that \eqref{Jk} is sharp:
\begin{enumerate}
    \item Take $X=[1,N]\cap\ZZ$ and $\gamma=(t, t^2)$, so that every $k$-fold sum of points of $\gamma(X)$ is a lattice point in the rectangle $[k,kN]\times[k,kN^2]$.
    \item Take $\gamma$ as the Jarn\'ik curve for the square $[1,N]^2$, a strictly convex curve that contains $\gg N^{2/3}$ lattice points in $[1,N]^2$, and $X$ as the projection of $\gamma\cap [1,N]^2$ onto the $x$-axis, so that every $k$-fold sum of points of $\gamma(X)$ is a lattice point in the square $[k,kN]^2$.
    A generalization of this example can be found in \cite[Section 6]{Cairo-Zhang}.
\end{enumerate}

\medskip  

\subsubsection{Three-fold additive energy for lattice points on circles}
\label{section-energy-circles}

We continue with another immediate corollary.
\begin{corollary}
\label{thm:maincircle}	

For each finite $P\subset \ZS^1$ and $\epsilon>0$ we have 
\[
    J_3(P)\ll_\epsilon |P|^{3+\epsilon}.
\]
In particular, letting $P_m$ denote the lattice points on the circle $\sqrt{m}\ZS^1$ of radius $\sqrt{m}$, $m\in \ZN$, we have
\begin{equation}
\label{eq:maincircle}
    J_3(P_m)\ll_\epsilon |P_m|^{3+\epsilon}.
\end{equation}	
\end{corollary}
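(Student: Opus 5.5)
We deduce the corollary from Theorem \ref{thm:main} by splitting the circle into four arcs, each of which is the graph of a strictly convex (or strictly concave) function over a coordinate axis, and then controlling the mixed solution counts by Hölder's inequality. The second statement \eqref{eq:maincircle} then follows by rescaling: $J_3$ is invariant under dilations, so $J_3(P_m)=J_3(m^{-1/2}P_m)$ with $m^{-1/2}P_m\subset \ZS^1$.

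\emph{Step 1: splitting $P$.} Write $P=P_1\cup P_2\cup P_3\cup P_4$ as a disjoint union, where $P_1$ lies in the open lower half circle $\{y<0\}$ and $P_2$ in the upper half $\{y>0\}$. The two points $(\pm1,0)$ are handled by putting them in $P_3$ or $P_4$ below; alternatively, one can rotate $P$ slightly first, since $J_3$ is invariant under rotations.
\begin{itemize}
\item The lower half circle is $\{(t,-\sqrt{1-t^2}):|t|<1\}$. Extend $f(t)=-\sqrt{1-t^2}$ from $(-1,1)$ to a strictly convex function on all of $\ZR$. Any extension works, since only the values on the finite projection set matter. For instance, a strictly convex function agreeing with $f$ on the finitely many relevant $t$ can be obtained by taking a slightly shrunk arc and extending by steep parabolas.
\item Then $P_1=\gamma(X_1)$ with $X_1$ the set of $x$-coordinates of $P_1$, and Theorem \ref{thm:main} gives $J_3(P_1)\ll_\e |P_1|^{3+\e}$.
\item For $P_2$, reflect via $(x,y)\mapsto(x,-y)$. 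This is linear, so it preserves $J_3$.
\item The points $(\pm1,0)$ can alternatively be covered by splitting instead into the left and right half circles, viewed as graphs over the $y$-axis. Swapping coordinates is again linear and preserves $J_3$.
\end{itemize}
So it suffices to have a bounded number of pieces $Q_1,\dots,Q_r$, each satisfying $J_3(Q_j)\ll_\e |Q_j|^{3+\e}\le |P|^{3+\e}$. The simplest choice is four open quarter-arcs plus the at most four axis points, with each quarter-arc treated as a subset of a strictly convex or concave graph.

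\emph{Step 2: combining the pieces.} Write
\[
J_3(P)=\int_{[0,1]^2}\Bigl|\sum_{p\in P}e(p\cdot\xi)\Bigr|^6d\xi
\]
after a harmless modification. Since the points are real rather than integral, use instead the standard identity
\[
J_3(Y)=\lim_{T\to\infty}\frac{1}{(2T)^2}\int_{[-T,T]^2}\Bigl|\sum_{y\in Y}e(y\cdot\xi)\Bigr|^6 d\xi.
\]
This expresses $J_3(Y)^{1/6}$ as an $L^6$-type norm (a limit of normalized $L^6$ norms), so the triangle inequality applies. With $P=\bigcup_{j=1}^r Q_j$ and $r=O(1)$, we get
\[
J_3(P)^{1/6}\le \sum_j J_3(Q_j)^{1/6},
\]
hence $J_3(P)\ll_\e |P|^{3+\e}$.

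\emph{Main obstacle.} The only slightly delicate point is Step 1: justifying that each arc piece is $\gamma(X)$ for some strictly convex $f$ defined on all of $\ZR$. This is needed because Theorem \ref{thm:main} is stated for $f:\ZR\to\ZR$. The remedy is to use the quarter arcs, whose $x$-projections lie in closed subintervals of $(-1,1)$ once the endpoints are removed. On such a subinterval $f$ has bounded derivative, and it extends to $\ZR$ by tangent lines plus $\de t^2$ terms to keep it strictly convex. The uniformity of the constant $C_\e$ in $\gamma$ makes the choice of extension irrelevant.
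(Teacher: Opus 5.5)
Your proposal is correct and is essentially the paper's argument. You split $P$ into boundedly many pieces, each lying on a strictly convex graph after a reflection or coordinate swap, apply Theorem \ref{thm:main} to each piece, and recombine with the $L^6$ triangle inequality through the limiting identity $J_3(Y)=\lim_{T\to\infty}(2T)^{-2}\int_{[-T,T]^2}|\sum_{y\in Y}e^{2\pi i y\cdot\xi}|^6\,d\xi$. The paper uses just the lower and upper halves of $\ZS^1$. Your split into quarter arcs plus axis points, with an explicit extension to a strictly convex function on $\ZR$, only makes explicit the harmless issue of the infinite slope at $(\pm1,0)$. Since the constant in Theorem \ref{thm:main} does not depend on $\gamma$, the choice of extension does not matter.
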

\begin{proof}
For two finite, pairwise disjoint sets $P_1,P_2$ in the plane we have
\[
    J_3(P_1\cup P_2)\le (J_3(P_1)^{1/6}+J_3(P_2)^{1/6})^6.
\]
One way to see this is via the identity
\[
    J_3(P)=\lim_{R\to\infty}\frac1{R^2}\int_{[0,R]^2}|\sum_{\xi\in P}e^{2\pi ix\cdot \xi}|^6dx,
\]
and the triangle inequality in $L^6$. We apply this to $P_1=P\cap \ZS^1_+$ and $P_2=P\cap \ZS^1_-$, where $\ZS^{1}_{\pm}$ are the strictly convex/concave lower/upper halves of $\ZS^1$.
\end{proof}

Understanding $J_3(P_m)$ is connected with the study of arithmetic random waves; see \cite{Krishnapur-Kurlberg-Wigman} and \cite{Bombieri-Bourgain}. 
The first non-trivial upper bound $o(|P_m|^{4})$ for $J_3(P_m)$ was obtained by Bourgain, with the result appearing in \cite[Theorem 2.2]{Krishnapur-Kurlberg-Wigman}. 
The proof used tools from additive combinatorics. 
The estimate \eqref{eq:maincircle} was conjectured to hold in \cite[page 710]{Krishnapur-Kurlberg-Wigman} and in \cite[page 3344]{Bombieri-Bourgain}. 
Both papers cautiously speculate on the possibility that the stronger upper bound $\ll|P_m|^3$ might hold.

Moreover, in \cite{Bombieri-Bourgain}, many specialized unconditional and conditional estimates were obtained for $J_3(P_m)$, using deep number theory.
We mention two such examples. 
First, their Theorem 8 proves that if \eqref{eq:maincircle} fails along a special sequence of $m$, this would imply the existence of certain elliptic curves of unbounded rank.
Second, in \cite[Theorem 25]{Bombieri-Bourgain},  \eqref{eq:maincircle} is proved for a random set of (squared) radii $m$ lying inside a specialized set of square-free numbers, but only conditional on the Riemann hypothesis and the Birch and Swinnerton--Dyer conjecture for the L-functions of elliptic curves over $\ZQ$. 
This was later proved unconditionally in \cite{Bourgain-Li} using a decoupling estimate with a sharper constant. 
However, this proof relies crucially on the fact that, under the special restriction on $m$, the size $|P_m|$ is close to the maximum value allowed by the divisor bound. 
For comparison, Theorem \ref{thm:main} works for all values of $|P_m|$.

\medskip

\subsubsection{Additive behavior for convex sequences}
\label{section-sumset-convex}

Our third application of Theorem \ref{thm:main} improves the best-known bounds on the additive behavior of convex sequences.

\begin{definition}
    Given $N>1$, we say a finite sequence of numbers (identified as a set) $A=\{a_1<a_2<\cdots<a_N\}\subset\ZR$ is \emph{convex} if 
    \[
    a_{k+1}-a_k>a_k-a_{k-1} \text{ for all $k\in[2,N-1]\cap \ZZ$}.
    \]
\end{definition}

An old conjecture attributed to Erd\H{o}s, but first written down in Hegyv\'ari's paper \cite{Heg86}, is that convex sequences should not have additive structure, in the sense that they should have nearly maximal sum and difference set.

\begin{conjecture}
    Let \(A \subset \mathbb{R}\) be a finite convex sequence. Then
    \[
    \min\{|A+A|, |A-A|\} \gg_{\e} |A|^{2 - \e}
    .\]
\end{conjecture}

We improve on the previous result by the first author \cite{Cushman-sumproduct}, who obtained the exponent $\frac{8}{5}+\frac{1}{3440}$ for $A-A$ and $\frac{46}{29}$ for $A+A$. In particular, we prove the following theorem in Section \ref{section-application}.

\begin{theorem}
    \label{difference-set-thm}
    Let $N\in[2,\infty)\cap \ZZ$ and let $A$ be a convex sequence of cardinality $N$.
    Then for any $\e>0$, there exists $c_\e>0$ independent of $A$ such that
    \begin{equation}
        \label{difference-set}
        |A-A|\geq c_\e|A|^{5/3-\e}, \qquad |A+A|\geq c_\e|A|^{8/5-\e}.
    \end{equation}
    
\end{theorem}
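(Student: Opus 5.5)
We derive Theorem \ref{difference-set-thm} from Theorem \ref{thm:main} by taking, as the convex curve, the one that interpolates the convex sequence. Write $A=\{a_1<\dots<a_N\}$ and set $f(i)=a_i$. Convexity of the sequence says the increments $a_{k+1}-a_k$ strictly increase, so the piecewise linear interpolant of the points $(i,a_i)$ can be perturbed into a strictly convex $f:\ZR\to\ZR$ with $f(i)=a_i$. Let $\gamma(t)=(t,f(t))$ and, for the relevant $X\subset\ZZ$, consider $\gamma(X)$. Since Theorem \ref{thm:main} is uniform in $\gamma$, we may also pass to sub-sequences of $A$ (which are again convex) and to other convex functions derived from $A$, still with constant $C_\e$.

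\textbf{Difference set.} Let $D=A-A$ and $r(d)=\#\{(i,j):a_i-a_j=d\}$. The plan is a Cauchy--Schwarz argument in which the main term is a six-fold count.
\begin{enumerate}
\item Fix a window length $h\sim N^{\alpha}$, to be optimised.
\item For each shift $s\in[1,h]$, the differences $a_{i+s}-a_i$ are distinct and increase in $i$. This gives about $Nh$ pairs $(i,s)$, and all the values $a_{i+s}-a_i$ lie in $D$.
\item By Cauchy--Schwarz, $(Nh)^2\le |D|\cdot Q$, where $Q$ counts solutions of $a_{i+s}-a_i=a_{j+t}-a_j$ with $s,t\le h$.
\item A coincidence $a_{i+s}-a_i=a_{j+t}-a_j$ pins down $i,j$ relative to each other, because for fixed $s,t$ both sides are monotone in the index.
\end{enumerate}
To go beyond the trivial bound we need to count higher-order coincidences. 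Equations like $a_{i_1}+a_{i_2}+a_{i_3}=a_{i_4}+a_{i_5}+a_{i_6}$ arise from comparing sums of consecutive differences. Each such equation is a statement about $\gamma(X)$ with $X=\{i_1,\dots,i_6\}$, but only once the first coordinate is also balanced: $i_1+i_2+i_3=i_4+i_5+i_6$. The approach is to sort the quadruples by the index sum. This is the standard trick: the energy of $A$ restricted to the hyperplane $\sum i$ equal on both sides is at most $J_3(\gamma(\{1,\dots,N\}))\ll N^{3+\e}$. We then use that $r$-weighted sums of the form $\sum_d r(d)^3$ control how often a difference is repeated in a structured way.

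The target estimate is
\[
\sum_d r(d)^{3}\ll_\e N^{4+\e}\quad\text{(or an analogue on short windows)}.
\]
Combined with Hölder, $N^2=\sum_d r(d)\le |D|^{2/3}\bigl(\sum r^3\bigr)^{1/3}$, this gives $|D|\gg N^{(6-4)\cdot 3/2 \cdot\ldots}$. We then pick the window parameters so the final exponent is $5/3-\e$. For the sum set, we run the same scheme with $r_+(s)=\#\{a_i+a_j=s\}$; the weaker relation between sums and the $J_3$ equation should yield $8/5-\e$.

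\textbf{Main obstacle.} The hardest step is bounding $\sum_d r(d)^3$, or its windowed version, by $J_3$. A triple of representations $d=a_{i}-a_{j}=a_{k}-a_{l}=a_{m}-a_{n}$ gives only two-term equations, not three-term ones with balanced indices. The first thing I would try is to use the extra freedom of constructing a new convex curve for each fixed small shift $s$. The map $i\mapsto a_{i+s}-a_i$ is itself a convex sequence. Applying Theorem \ref{thm:main} to these derived sequences, uniformly in $s$ thanks to the $\gamma$-independence of $C_\e$, and then summing over $s\le h$ should supply the missing balancing in the index coordinate.
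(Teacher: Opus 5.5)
Your proposal has a genuine gap. The route you outline cannot reach $5/3$, and the step you single out as the main obstacle is not the real one.

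The quantity $\sum_d r(d)^3$ is exactly $E_3(A)$. For convex $A$ the bound $E_3(A)\ll|A|^3\log|A|$ is already classical (Proposition \ref{prop:sharp-e3}), which is stronger than your target $N^{4+\e}$, so no input from Theorem \ref{thm:main} is needed there. More seriously, your H\"older step $N^2\le|A-A|^{2/3}E_3(A)^{1/3}$ gives $|A-A|\ge N^3E_3(A)^{-1/2}$. Under your target this is only $|A-A|\gg N^{1-\e}$. Since $E_3(A)\ge r(0)^3=N^3$, no bound on $E_3$ whatsoever can push this argument past $N^{3/2}$; the unfinished exponent computation and the unspecified window choice hide this. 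The windowed count in steps 1--4 is also trivial as stated: monotonicity gives $Q\le h^2N$, hence only $|A-A|\ge N$. Finally, your fallback idea rests on a false claim. The map $i\mapsto a_{i+s}-a_i$ is increasing but not convex in general, because its second differences are governed by third differences of $A$. For example, $A=\{0,1,11,22\}$ is convex, but $a_{i+1}-a_i=1,10,11$ is not.

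Two ingredients are missing.

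\emph{(i) The right use of Theorem \ref{thm:main}.} It should bound the three-fold energy $J_3(A)=\#\{a_1+a_2+a_3=a_4+a_5+a_6\}$, not $E_3$. Your ``sort by index sum'' remark is the germ of this. Let $r(p,q)$ count representations of $(p,q)$ as a triple sum from $\gamma(\{1,\dots,N\})$. Cauchy--Schwarz over the at most $3N$ values of $p$ gives $J_3(A)\le\sum_q\bigl(\sum_p r(p,q)\bigr)^2\le 3N\,J_3(\gamma(\{1,\dots,N\}))\ll_\e N^{4+\e}$. Uniformity in $\gamma$ is essential here, since $f$ depends on $A$.

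\emph{(ii) An inequality coupling $E_3$, $J_3$ and $|A-A|$.}
\begin{enumerate}
\item Let $P$ be the set of differences with at least $\delta N$ representations, where $\delta=N/(11|A-A|)$.
\item Let $Z_-$ be the set of triples $(r,a,a')$, with $r$ in a popular subset of $A$, such that $r-a,\ r-a',\ a'-a\in P$. Then $|Z_-|\gg N^3$.
\item Cauchy--Schwarz along $\pi_-(r,a,a')=(r-a,r-a')$ gives $|Z_-|^2\le E_3(A)\,|\pi_-(Z_-)|$.
\item Expanding $p_1-p_2=p_3$ with $p_i\in P$ into differences from $A$ gives $(\delta N)^3|\pi_-(Z_-)|\le J_3(A)$.
\end{enumerate}
Together these give $N^{12}\ll|A-A|^3E_3(A)J_3(A)\ll|A-A|^3N^{7+\e}\log N$, which is the $5/3-\e$ bound.

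For $A+A$ your sketch gives no argument at all. The paper needs the analogous projection $\pi_+(r,r',b)=(r+b,r'+b)$, applied on a large subset $B$ supplied by the Rudnev--Stevens lemma (Lemma \ref{lem:rudnev-stevens-input}). It also needs dyadic pigeonholing of $E_2(R(B))$, interpolation with $\Delta^3|P_\Delta|\le E_3(A)$, and the Cauchy--Schwarz bound $N^4\ll|A+A|\,E_2(B)$. These combine to give $|A+A|^5\gg N^{8-\e}$.
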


\bigskip  

\subsection{Key ideas and outline of the argument}
\label{outline-subsection}

With $N:=|X|$, we order the set
\[
X=\{t_1<\cdots<t_N\}.
\]
For $u=t_i$ and $v=t_j$, we introduce the rank and gap
\[
\operatorname{rk}_X(u)=i,\qquad \gap_X(u,v):=|i-j|.
\]

\smallskip 

The key idea behind Theorem \ref{thm:main} is that, given a point $p\in\ZR^2$, there exists an `ordered structure' for the set of triples $(x,y,z)\in X^3$ satisfying
\begin{equation}
    \nonumber
    \gamma(x)+\gamma(y)+\gamma(z)=p.
\end{equation}
More precisely, suppose we have two triples $(x,y,z)$ and $(x',y',z')$ listed as $x<y<z$ and $x'<y'<z'$, such that 
\begin{equation}
    \label{six-tuples}
    \gamma(x)+\gamma(y)+\gamma(z)=\gamma(x')+\gamma(y')+\gamma(z').
\end{equation}
If $x<x'$, then these six numbers must satisfy
\begin{equation}
    \label{order}
    x<x'<y'<y<z<z'.
\end{equation}
The proof of this ordering property can be found in Lemma \ref{interlacing-lem}, which is a consequence of Karamata's inequality (see Lemma \ref{karamata}). 

Thus, the set of triples 
\begin{equation}
    \nonumber
    \Omega(p):=\{(x_j,y_j,z_j)\in X^3:x_j<y_j<z_j,\, \gamma(x_j)+\gamma(y_j)+\gamma(z_j)=p\}
\end{equation}
can be organized as (with $n=|\Omega(p)|$)
\begin{equation}
    \label{interlacing-property}
    x_1<x_2<\cdots<x_n<y_n<y_{n-1}<\cdots<y_1<z_1<z_2<\cdots<z_n.
\end{equation}
This interlacing property significantly reduces the complexity of the set $\Omega(p)$, and it is the key input in the proof of Theorem \ref{thm:main}.
In fact, with the interlacing property, a rather simple application of the Szemer\'edi--Trotter-type theorem for translations of one strictly convex curve leads to $J_3(\gamma(X))\ll |X|^{10/3}$. 
We will provide a sketch of the argument at the end of this subsection.

\begin{definition}
    \label{Th}
    Let $1\leq D\leq N$.
    Define the quantity $T_X(D)$ as the number of six-tuples
    \begin{equation}
        \label{condition-1}
        x < x' < y' < y < z < z',
        \qquad
        x,x',y,y',z,z' \in X,
    \end{equation}
    such that $ \gamma(x)+\gamma(y)+\gamma(z)=\gamma(x')+\gamma(y')+\gamma(z')$, and
    \begin{equation}
    \nonumber
        \operatorname{gap}_X(x,x'),\
        \operatorname{gap}_X(y,y'), \
        \operatorname{gap}_X(z,z') \leq D.
    \end{equation}
\end{definition}

\begin{definition}
    \label{extremal-def}
    Define the extremal quantities
    \begin{equation}
        \nonumber 
        J(N)=\sup_{\substack{X\subset \R,\\ |X|=N}}J_3(\gamma(X)),
        \qquad
        T(N,D)=\sup_{\substack{X\subset \R,\\ |X|=N}}T_X(D).
    \end{equation}
\end{definition}

\medskip 

A key consequence of the interlacing property \eqref{interlacing-property} is (see Proposition \ref{telescoping-prop})
\begin{equation}
\label{telescoping-intro}
    J(N)\ll \frac{N^4}{D} + \frac{N}{D}T(N,D).
\end{equation}
This is our first key estimate.

Our second key estimate is   (see Proposition \ref{iteration-prop}): for each $\eta>0$ and $1\leq D\leq N$,
\begin{equation}
\label{iteration-intro}
    T(N,D)\ll_{\eta}\left({\frac ND}\right)^{2+\eta}J(D).
\end{equation}
These two estimates combine to give 
\begin{equation}
\nonumber
    J(N)\ll_\eta \frac{N^4}{D} +\left({\frac ND}\right)^{3+\eta}J(D).
\end{equation}
Oversimplifying a bit, if we assume polynomial growth, meaning $J(M)\sim M^\alpha$ for each $M$, we immediately get $\alpha\le 3$, so in fact $\alpha=3$.

We remark that our proof of \eqref{telescoping-intro} is purely combinatorial. 
On the other hand, the proof of  \eqref{iteration-intro} uses harmonic analysis: the classical boundedness of the Hilbert transform, $L^2$ orthogonality and vector-valued interpolation. 

Strict convexity plays two critical roles in our argument. 
First, we use it to obtain the interlacing \eqref{interlacing-property}, leading to \eqref{telescoping-intro}. 
Second, we use it to derive the geometric observation behind the use of $L^2$ orthogonality; see Lemma \ref{cone-for-difference}.

\bigskip  

Finally, let us show $J_3(\gamma(X))\ll N^{10/3}$.
Write $|X|=N$. 
For any point $p\in\ZR^2$, denote by
\begin{align}
    \nonumber
    \ell(p)
    &=
    \#\{
    (x,x',y)\text{ obeying \eqref{condition-1} and }\operatorname{gap}_X(x,x')\le D:\gamma(x)-\gamma(x')+\gamma(y)=p 
    \},\\ \nonumber
    r(p)
    &=
    \#\{
    (y',z,z')\text{ obeying \eqref{condition-1} and } \operatorname{gap}_X(z,z')\le D:\gamma(y')-\gamma(z)+\gamma(z')=p \}.
\end{align}
We have $T_X(D)\le \sum_{p}\ell(p)r(p)$, by ignoring the gap restriction in the middle term.

Consider two families of translations of the strictly convex curve $\gamma$: $\{\gamma+\gamma(x)-\gamma(x'): x, x'\in X, \ \operatorname{gap}_X(x,x') \leq D\}$ and $\{\gamma+\gamma(z')-\gamma(z): z, z'\in X, \ \operatorname{gap}_X(z,z') \leq D\}$, and let $\cC$ be the union of these two families.
Observe that 
\begin{equation}
\label{curve-upperbound-intro}
    |\cC|\ll ND.
\end{equation}
We note that $\ell(p)+r(p)$ is bounded by the number $I(p)$ of incidences between the point $p$ and the curves $\cC$. 
By Lemma \ref{lem:pair-sum} we have that $I(p)\ll N$. 
For each dyadic $r\ll N$ let $Q_r$ be the number of those $p$ with $I(p)\sim r$.
Using the  Szemer\'edi-Trotter type theorem for translations of a fixed convex curve (see, for example, \cite[Theorem 6]{Szekely}), we have
\[
Q_r\ll \frac{|\cC|}{r}+\frac{|\cC|^2}{r^3}.
\]
This and \eqref{curve-upperbound-intro} gives
\begin{equation}
    \nonumber
    \sum_{p:\;I(p)\ge \sqrt{D}}\ell(p)r(p)\le \sum_{p:\;I(p)\ge \sqrt{D}}(\ell(p)+r(p))^2\ll \sum_{ \sqrt{D}\ll r\ll N}r^2(\frac{|\cC|}{r}+\frac{|\cC|^2}{r^3})\ll N^2D^{3/2}.
\end{equation}
On the other hand, we have the density estimate
\begin{equation}
    \nonumber
    \sum_{p}\ell(p), \ \sum_{p}r(p)\ll N^2D.
\end{equation}
This immediately gives the same estimate as before 
\[
\sum_{p:\;I(p)\le \sqrt{D}}\ell(p)r(p)\ll N^2D^{3/2}.
\]
Thus, $T_X(D)\ll N^2D^{3/2}$.
We plug this back into \eqref{telescoping-intro} and choose $D=N^{2/3}$ to conclude that $J_3(\gamma(X))\ll |X|^{10/3}$.

\bigskip

\subsection{Communication with AI}
The use of GPT-5.6 was central to the main theorem.
In the first stage, we only considered the case when $\gamma$ is the parabola.
We provided the AI with known results and some of our related attempts, including earlier improvements over the 4-fold energy $J_4(\gamma(X))$ (private conversation with AI), and asked it to improve upon the $7/2$ bound for $J_3(\gamma(X))$.
The AI then observed that, for any triple $(x_1, x_2, x_3)$ satisfying
\begin{equation}
    \label{set-of-pts}
    \gamma(x_1)+\gamma(x_2)+\gamma(x_3)=(p_1,p_2),
\end{equation}
the numbers $x_1, x_2, x_3$ are precisely the roots of a cubic polynomial of the form
\[
    P_{q_1,q_2}(x)=x^3+q_2x^2+q_1x+q_0,
\]
where $q_2$ and $q_1$ are uniquely determined by $(p_1,p_2)$.

Consequently, for each fixed point $p=(p_1,p_2)$, there is a one-to-one correspondence between the triples $(x_1,x_2,x_3)$ with $x_1<x_2<x_3$ satisfying \eqref{set-of-pts} and the corresponding values of the constant term $q_0=q_0(x_1,x_2,x_3)$.
Geometrically, $q_0$ is the vertical intercept of the graph of $P_{q_1,q_2}(x)$. 
As $q_0$ increases, $x_1$ must decrease, $x_2$ must increase, and $x_3$ must decrease. 
This is exactly the interlacing phenomenon described in \eqref{interlacing-property}, which leads to the $10/3$ exponent for $J_3(\gamma(X))$, outlined in Section \ref{outline-subsection}.

After the successful advances in the parabolic case, we immediately realized that the same idea extends to strictly convex curves. We also found that this idea iterates nicely, and determined that this iteration should form the main scheme in the second stage.
This iteration, however, differed from the formula \eqref{iteration-intro} used in the current argument.
It repeatedly used the interlacing argument to bound $T_X(D)$ via variants of  \eqref{telescoping-intro}.
From the beginning of this stage, we experimented with several types of iteration formulas, including formulas involving a substantially more complicated form of $T_X(D)$, in which more than three parameters were used to measure the gaps between the paired elements of the six-tuples in \eqref{six-tuples}.
Although this approach yielded further improvements, it continued to rely on the Szemer\'edi--Trotter-type estimate, and the resulting sequence of exponents appeared to converge to a value just slightly below $16/5$.

The final stage began when we instructed the AI to prove a proposed iteration formula that was even more complicated than \eqref{iteration-intro}.
It successfully established this formula, at which point we realized that its simplest version, namely \eqref{iteration-intro}, was already strong enough to prove the near-optimal bound for $J_3(\gamma(X))$.

\bigskip

\noindent {\bf Notation.}
We write $A\ll B$ to mean that $|A|\leq C B$ for some constant $C>0$. 
A subscript indicates the parameters on which the implicit constant may depend; for example, $A\ll_{\epsilon} B$ means that $C$ may depend on $\epsilon$.
Unless otherwise stated, all implicit constants are absolute.

\bigskip

\noindent {\bf Acknowledgements.}
The second author is partially supported by the NSF grant DMS-2349828.
The third author is partially supported by the NSF grant DMS-2453583.

\bigskip

\section{Preliminary results}

Fix $1\leq D\leq N$ and partition $X$ into consecutive rank blocks
\begin{equation}
    \label{partition}
    I_1,\ldots,I_h,
\end{equation}
of size $D$, except possibly for the final block, which is allowed to be shorter. 
Then
\begin{equation}
    \nonumber
    h=\left\lceil\frac ND\right\rceil\leq\frac{2N}{D}.
\end{equation}

\bigskip 

\subsection{Elementary convex geometry}
\begin{lemma}
    \label{lem:pair-sum}
    Suppose $u\leq u'$ and $v\leq v'$.
    Then
    \begin{equation}
    \nonumber
        \gamma(u)-\gamma(u')=\gamma(v)-\gamma(v')
    \end{equation}
    if and only if $u=u'$ and $v=v'$, or $u=v$ and $u'=v'$.
\end{lemma}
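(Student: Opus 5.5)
The "if" direction is immediate: if $u=u'$ and $v=v'$ both sides vanish, and if $u=v$, $u'=v'$ the two sides are literally the same vector. For the converse, assume $\gamma(u)-\gamma(u')=\gamma(v)-\gamma(v')$ with $u\le u'$, $v\le v'$.

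First compare first coordinates. Since $\gamma(t)=(t,f(t))$, they give $u-u'=v-v'$, so the two pairs have a common length $L:=u'-u=v'-v\ge 0$. If $L=0$, then $u=u'$ and $v=v'$, which is the first alternative. So assume $L>0$. The second coordinates give $f(u')-f(u)=f(v')-f(v)$. Dividing by $L$, the two secant slopes
\[
\frac{f(u')-f(u)}{L}=\frac{f(v+L)-f(v)}{L}
\]
coincide. It remains to show this forces $u=v$.

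The key step is the standard fact that for a strictly convex $f$ and fixed $L>0$, the function $g(s):=f(s+L)-f(s)$ is strictly increasing in $s$. Granting this, $g(u)=g(v)$ gives $u=v$, hence $u'=u+L=v+L=v'$, which is the second alternative.

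To prove the monotonicity, I will use the three-slope (chord) inequality. For strictly convex $f$ and $a<b<c$ we have
\[
\frac{f(b)-f(a)}{b-a}<\frac{f(c)-f(a)}{c-a}<\frac{f(c)-f(b)}{c-b},
\]
which follows directly from the definition by writing $b$ as a convex combination of $a$ and $c$. Now take $s<t$ and compare the chord over $[s,s+L]$ with the chord over $[t,t+L]$. Secant slopes of a strictly convex function are strictly increasing in each endpoint, so moving the left endpoint from $s$ to $t$ and then the right endpoint from $s+L$ to $t+L$ strictly increases the slope at each move. The only care needed is the case $t\ge s+L$, where the intermediate interval $[t,s+L]$ would be degenerate or reversed. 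There I will instead chain through the disjoint or abutting intervals directly: the slope on $[s,s+L]$ is strictly less than the slope on $[s+L,t]$, which is strictly less than the slope on $[t,t+L]$, omitting the middle interval when $t=s+L$.

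This case bookkeeping is the only place where any care is required; the whole lemma is elementary.
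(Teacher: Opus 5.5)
Your proof is correct and follows the paper's argument exactly: compare first coordinates to get a common length $L\ge 0$, dispose of $L=0$, and for $L>0$ use strict monotonicity of $s\mapsto f(s+L)-f(s)$ to force $u=v$. The paper simply asserts that monotonicity, so your three-slope verification, including the separate handling of $t\ge s+L$, only fills in a detail the paper leaves implicit.
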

\begin{proof}
    Recall that $\gamma(x)=(x,f(x))$.
    Comparing the first coordinates gives
    \[
    u'-u=v'-v=:h\geq 0.
    \]
    If \(h=0\), then \( (u,v)=(u',v')\).
    If \(h>0\), comparing the second coordinates gives
    \[
    f(u+h)-f(u)=f(v+h)-f(v).
    \]
    Since \(f\) is strictly convex, for every fixed \(h>0\), the function $x\longmapsto f(x+h)-f(x)$ is strictly increasing. 
    Hence \(u=v\), and consequently
    \[
    u'=u+h=v+h=v'.
    \]
    The converse is immediate.
\end{proof}

\begin{lemma}
    \label{cone-for-difference}
    Let $\{I_j\}$ be the partition of $X$ in \eqref{partition}.
    For each $I_j$, define the set of chord-vectors
    \begin{equation}
        \nonumber
        E_j=\{\gamma(x')-\gamma(x):x,x'\in I_j,\ x<x'\}.
    \end{equation}
    Then $E_j$ are pairwise disjoint.
    In fact, there exist pairwise disjoint conic regions $V_j$ (two-sided, symmetric around the origin) such that $E_j$ is contained in the interior of $V_j$, and the $V_j$ are adjacent to each other.
\end{lemma}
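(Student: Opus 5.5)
The plan is to describe each chord-vector by the direction of its slope and to use strict convexity to show that the slopes of chords inside $I_j$ lie in an interval that sits strictly to the left of the corresponding interval for $I_{j+1}$.

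For $x<x'$ the vector $\gamma(x')-\gamma(x)=(x'-x,\,f(x')-f(x))$ has positive first coordinate. It is therefore determined, up to positive scaling, by the slope
\[
s(x,x')=\frac{f(x')-f(x)}{x'-x}.
\]
The tool I would use is the standard three-chord monotonicity for strictly convex $f$. If $a<b$, $c<d$, $a\le c$, $b\le d$ and $(a,b)\neq(c,d)$, then $s(a,b)<s(c,d)$. This follows from strict convexity by comparing through the intermediate chord $s(a,d)$ or $s(b,c)$.

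Write $I_j=\{t_{m_j}<\dots<t_{M_j}\}$. Since the blocks are consecutive in rank, $M_j<m_{j+1}$. Any chord of $I_j$ satisfies $s(x,x')\le s(t_{M_j-1},t_{M_j})$, and any chord of $I_{j+1}$ satisfies $s(y,y')\ge s(t_{m_{j+1}},t_{m_{j+1}+1})$. By monotonicity, and because $t_{M_j}\le t_{m_{j+1}}$ while $t_{M_j-1}<t_{m_{j+1}+1}$, we get the strict inequality $s(t_{M_j-1},t_{M_j})<s(t_{m_{j+1}},t_{m_{j+1}+1})$.

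Set $\alpha_j=\min_{E_j}s$ and $\beta_j=\max_{E_j}s$; these exist since $I_j$ is finite. Then
\[
\alpha_1\le\beta_1<\alpha_2\le\beta_2<\cdots<\alpha_h\le\beta_h .
\]
This already shows the $E_j$ are pairwise disjoint, since their slope sets are disjoint. Blocks of size one give $E_j=\emptyset$ and can be handled trivially.

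To build the cones, choose separating slopes $\sigma_j\in(\beta_j,\alpha_{j+1})$ for $1\le j<h$, for instance midpoints, and set $\sigma_0=-\infty$, $\sigma_h=+\infty$. Define $V_j$ as the closed two-sided cone of vectors $\lambda(1,s)$ with $\lambda\in\mathbb R$ and $s\in[\sigma_{j-1},\sigma_j]$. For the endpoint cones, include the appropriate vertical direction, so that $V_1\cup\dots\cup V_h$ covers the plane.

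These cones are symmetric about the origin and consecutive ones share exactly a boundary ray pair. They have disjoint interiors, which is the sense of ``pairwise disjoint'' intended here, so one may take $V_j$ half-open in the slope parameter if literal disjointness is wanted. Each $E_j$ lies in the interior of $V_j$ because $\sigma_{j-1}<\alpha_j\le\beta_j<\sigma_j$.

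The only real work is the strict slope comparison between adjacent blocks. The rest is bookkeeping about how the cones are described.
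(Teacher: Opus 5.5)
Your proposal is correct and follows essentially the same route as the paper. The paper also uses that the chord slope $\sigma(u,v)=\frac{f(v)-f(u)}{v-u}$ is strictly increasing in each variable to conclude that all chord slopes from $I_j$ lie strictly below those from $I_k$ for $j<k$, and then separates them by open cones whose adjacent members share a common boundary; your version just makes explicit the extreme-chord comparison between adjacent blocks and the choice of separating slopes, which the paper leaves implicit.
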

\begin{proof}
    For $u<v$, write
    \begin{equation}
        \nonumber
        \sigma(u,v)=\frac{f(v)-f(u)}{v-u}.
    \end{equation}
    Since $f$ is strictly convex, $\sigma(u,v)$ is strictly increasing in each variable separately. 
    
    It follows that the slopes of the chord-vectors $\gamma(v)-\gamma(u): u,v\in I_j$  are strictly smaller than the slopes of  $\gamma(v)-\gamma(u): u,v\in I_k$, if $j<k$. 
    Hence these vectors can be placed inside pairwise disjoint open cones
    \begin{equation}
    \nonumber
        V_1,\ldots, V_h,
    \end{equation} 
    such that the adjacent cones $V_j$ and $V_{j+1}$ share a common boundary.
    The following figure illustrates two such disjoint cones.   \qedhere
    
    \begin{figure}[htbp]
        \centering
        \begin{tikzpicture}[>=Latex,font=\small]
            \begin{scope}[xshift=0cm,yshift=0cm,x=0.82cm,y=0.82cm]
                \node[font=\bfseries] at (2.95,3.18) {internal arcs on the convex graph};
                \draw[->] (-0.1,0) -- (6.05,0) node[right] {$t$};
                \draw[->] (0,-0.1) -- (0,2.82);
                \draw[thick,domain=0.25:5.65,samples=120] plot (\x,{0.075*\x*\x+0.24});
                
                \coordinate (a1) at (0.55,{0.075*0.55*0.55+0.24});
                \coordinate (a2) at (0.95,{0.075*0.95*0.95+0.24});
                \coordinate (a3) at (1.35,{0.075*1.35*1.35+0.24});
                \coordinate (b1) at (4.15,{0.075*4.15*4.15+0.24});
                \coordinate (b2) at (4.55,{0.075*4.55*4.55+0.24});
                \coordinate (b3) at (4.95,{0.075*4.95*4.95+0.24});
                
                \foreach \P in {a1,a2,a3,b1,b2,b3} \fill (\P) circle (1.7pt);
                
                \draw[->,very thick] (a1) -- (a3);
                \draw[->,very thick] (b1) -- (b3);
                \draw[decorate,decoration={brace,mirror,amplitude=4pt}] (0.43,-0.12) -- (1.47,-0.12)
                node[midway,below=5pt] {$I_1$};
                \draw[decorate,decoration={brace,mirror,amplitude=4pt}] (4.03,-0.12) -- (5.07,-0.12)
                node[midway,below=5pt] {$I_2$};
            \end{scope}
            
            \draw[->,very thick] (5.05,1.45) -- (5.95,1.45)
            node[midway,above] {$\gamma(v)\!-\!\gamma(u)$};
            
            \begin{scope}[xshift=6.35cm,yshift=0cm,x=0.86cm,y=0.86cm]
                \node[font=\bfseries] at (2.35,3.18) {frequency plane};
                \draw[->] (-0.1,0) -- (5.05,0) node[right] {$\xi_1$};
                \draw[->] (0,-0.1) -- (0,2.85) node[above] {$\xi_2$};
                
                \path[fill=gray!14] (0,0) -- (4.55,0.40) -- (4.55,0.82) -- cycle;
                \draw[thick] (0,0) -- (4.55,0.40);
                \draw[thick] (0,0) -- (4.55,0.82);
                \draw[->,very thick] (0,0) -- (3.65,0.52);
                \node[anchor=west] at (4.03,0.60) {$V_1$};
                
                \path[fill=gray!30] (0,0) -- (3.05,1.66) -- (3.05,2.30) -- cycle;
                \draw[thick] (0,0) -- (3.05,1.66);
                \draw[thick] (0,0) -- (3.05,2.30);
                \draw[->,very thick] (0,0) -- (2.42,1.68);
                \node[anchor=west] at (2.88,2.03) {$V_2$};
                
            \end{scope}
        \end{tikzpicture}
        
        \label{fig:emerging-cones}
    \end{figure}

\end{proof}

\begin{definition}[Majorization]
    Let $a=(a_1,\dots,a_n)$ and $b=(b_1,\dots,b_n)$ be decreasing sequences of numbers.
    We say that $a$ \emph{majorizes} $b$ if 
    \begin{equation}
    \nonumber
        \sum_{i=1}^n a_i=\sum_{i=1}^n b_i \quad \text{ and} \quad\sum_{i=1}^k a_i\ge \sum_{i=1}^k b_i
        \quad\text{for }k=1,\dots,n-1.
    \end{equation}
\end{definition}

\begin{lemma}[Karamata's inequality]
    \label{karamata}
    Let $a=(a_1,\dots,a_n)$ and $b=(b_1,\dots,b_n)$ be two distinct decreasing sequences of numbers:
    \[
    a_1\ge \cdots\ge a_n,
    \qquad
    b_1\ge \cdots\ge b_n.
    \]
    Let $f$ be strictly convex.
    If $a$ majorizes $b$, then
    \[
    \sum_{i=1}^n f(a_i)>
    \sum_{i=1}^n f(b_i).
    \]
\end{lemma}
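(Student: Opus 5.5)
The plan is to prove Karamata's inequality (Lemma \ref{karamata}) by Abel summation against the slopes of $f$. Write $A_k=\sum_{i\le k}a_i$ and $B_k=\sum_{i\le k}b_i$, with $A_0=B_0=0$. Majorization says $A_k\ge B_k$ for $k<n$ and $A_n=B_n$.

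First, for each $i$ with $a_i\ne b_i$, define the chord slope
\[
c_i=\frac{f(a_i)-f(b_i)}{a_i-b_i}.
\]
When $a_i=b_i$, pick $c_i$ to be any number lying between the one-sided derivatives of $f$ at $a_i$; these exist since $f$ is convex. In every case $f(a_i)-f(b_i)=c_i(a_i-b_i)$.

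The key monotonicity claim is that $c_1\ge c_2\ge\cdots\ge c_n$. This follows from the standard fact that, for a convex function, the chord slope $\sigma(u,v)$ (the quantity $\sigma$ from the proof of Lemma \ref{cone-for-difference}, extended to degenerate pairs by one-sided derivatives) is nondecreasing in each endpoint. Since $a_i\ge a_{i+1}$ and $b_i\ge b_{i+1}$, the pair $\{a_i,b_i\}$ dominates $\{a_{i+1},b_{i+1}\}$ endpoint-wise after sorting, which gives $c_i\ge c_{i+1}$.

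Abel summation then gives
\[
\sum_{i=1}^n \bigl(f(a_i)-f(b_i)\bigr)=\sum_{i=1}^n c_i\bigl((A_i-B_i)-(A_{i-1}-B_{i-1})\bigr)=\sum_{k=1}^{n-1}(c_k-c_{k+1})(A_k-B_k)+c_n(A_n-B_n).
\]
The last term vanishes because $A_n=B_n$. Every other term is a product of two nonnegative factors, so the sum is $\ge 0$.

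The main obstacle is upgrading this to a strict inequality using strict convexity and $a\ne b$. Let $k$ be the first index with $a_k\ne b_k$. Then $A_{k-1}=B_{k-1}$ and $a_k>b_k$, since $A_k\ge B_k$. Let $m\ge k$ be the largest index such that $A_j>B_j$ for all $k\le j\le m$. Then $m<n$, and $A_{m+1}=B_{m+1}$ if $m+1<n$; for $m+1=n$ this holds automatically. So $a_{m+1}<b_{m+1}$.

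It suffices to show $c_k>c_{m+1}$. Then some $j\in[k,m]$ has $c_j>c_{j+1}$ and $A_j>B_j$, making the sum strictly positive.

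To see $c_k>c_{m+1}$, compare the endpoints of the two chords. We have $b_k<a_k$ and $a_{m+1}<b_{m+1}$, with $a_{m+1}\le a_k$ and $b_{m+1}\le b_k<a_k$. So the chord $[a_{m+1},b_{m+1}]$ has both endpoints $\le$ those of the chord $[b_k,a_k]$, and its right endpoint $b_{m+1}$ is strictly less than $a_k$. For a strictly convex $f$, the chord slope is strictly increasing in each variable. Moving one endpoint strictly upward while the other weakly increases therefore gives $c_k>c_{m+1}$, provided both chords are nondegenerate, which holds since $a_k\ne b_k$ and $a_{m+1}\ne b_{m+1}$.

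I expect this strictness bookkeeping, in particular handling the degenerate indices where $a_i=b_i$ inside the monotonicity claim, to be the only delicate point. The rest is routine Abel summation.
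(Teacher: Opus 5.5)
The paper gives no proof of this lemma. It quotes Karamata's inequality as a classical fact and uses it only for $n=3$, in Lemma~\ref{interlacing-lem}.

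Your Abel-summation argument is the standard proof, and it is correct, including the strictness step. From $a_{m+1}<b_{m+1}\le b_k<a_k$, both endpoints of the chord $[a_{m+1},b_{m+1}]$ lie strictly below those of $[b_k,a_k]$, so strict monotonicity of $\sigma$ gives $c_{m+1}<c_k$. For the left endpoints the comparison you need is $a_{m+1}<b_k$; the fact $a_{m+1}\le a_k$ that you cite does not give it.

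The one thing to repair is the choice of $c_i$ at indices with $a_i=b_i$. You cannot take an arbitrary value in $[f'_-(a_i),f'_+(a_i)]$. Suppose $a_i=b_i=a_{i+1}=b_{i+1}=x$ and $x$ is a kink of $f$. Then $c_i=f'_-(x)<f'_+(x)=c_{i+1}$ is an allowed choice, and it breaks your claim $c_i\ge c_{i+1}$.

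Use a rule depending only on the point, for instance $c_i=f'_+(a_i)$. The mixed cases then work as follows:
\begin{itemize}
\item If index $i+1$ is degenerate at $x\le u<v$, then $f'_+(x)\le\sigma(x,v)\le\sigma(u,v)$.
\item If index $i$ is degenerate at $x$ and $u'<v'\le x$, then $\sigma(u',v')\le f'_-(v')\le f'_-(x)\le f'_+(x)$.
\end{itemize}
The degenerate $c_i$ multiply $a_i-b_i=0$, so this choice does not change the sum, and the rest of your argument goes through unchanged.
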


\medskip 

As mentioned in the introduction, a key observation for bounding $J_3(\gamma(X))$ is the following interlacing lemma, which follows from Karamata's inequality.

\begin{lemma}[Interlacing]
    \label{interlacing-lem}
    
    Let $(x,y,z)$ and $(x',y',z')$ be two strictly increasing triples of numbers.
    Suppose that $\gamma(x)+\gamma(y)+\gamma(z)=\gamma(x')+\gamma(y')+\gamma(z')$.
    If $x<x'$, then
    \[
    x<x'<y'<y<z<z'.
    \]
\end{lemma}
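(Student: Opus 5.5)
We plan to compare first coordinates and second coordinates separately, and reduce everything to Karamata's inequality (Lemma \ref{karamata}) applied to the decreasing sequences $(z,y,x)$ and $(z',y',x')$. Comparing first coordinates gives
\[
x+y+z=x'+y'+z',
\]
and comparing second coordinates gives $f(x)+f(y)+f(z)=f(x')+f(y')+f(z')$. The two triples are distinct, since $x<x'$. So by Lemma \ref{karamata}, neither of the decreasing sequences $a=(z,y,x)$, $b=(z',y',x')$ can majorize the other: majorization would force a strict inequality between the $f$-sums, contradicting their equality.

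Next we translate ``neither majorizes the other'' into order relations. The sums are equal, so with $n=3$ majorization of $b$ by $a$ means $z\ge z'$ and $z+y\ge z'+y'$. Since the totals agree, the second condition is equivalent to $x\le x'$. Thus $a$ majorizes $b$ if and only if $z\ge z'$ and $x\le x'$. We are given $x<x'$, so $a$ fails to majorize $b$ only if $z<z'$. Symmetrically, $b$ majorizes $a$ if and only if $z'\ge z$ and $x'\le x$; this fails automatically because $x'>x$, so it gives no information. Hence the conclusion so far is $x<x'$ and $z<z'$.

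It remains to place $y$ and $y'$. From $x+y+z=x'+y'+z'$ together with $x<x'$ and $z<z'$ we get
\[
y-y'=(x'-x)+(z'-z)>0,
\]
so $y'<y$. The remaining inequalities $x'<y'$ and $y<z$ hold because each triple is strictly increasing. Chaining these gives $x<x'<y'<y<z<z'$.

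The only step needing care is the equivalence between majorization and the pair of inequalities on the extreme entries. We will check it directly from the definition with $k=1,2$, using the equality of the total sums to rewrite the $k=2$ condition. Everything else is immediate, so we expect no real obstacle.
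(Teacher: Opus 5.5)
Your proposal is correct and follows essentially the same route as the paper. Both apply Karamata to the decreasing triples $(z,y,x)$ and $(z',y',x')$ and observe that $x<x'$ forces $z+y>z'+y'$, so $z\ge z'$ would give majorization and hence a strict inequality of the $f$-sums; this yields $z<z'$, and then $y'<y$ follows from the equal first-coordinate sums.
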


\begin{proof}
    The two coordinates in $\gamma(x)+\gamma(y)+\gamma(z)=\gamma(x')+\gamma(y')+\gamma(z')$ give
    \begin{align}
        x+y+z
        &=x'+y'+z', \label{eq:scalar-sums}\\
        f(x)+f(y)+f(z)
        &=f(x')+f(y')+f(z'). \label{eq:f-sums}
    \end{align}
    
    We first show that $z<z'$. 
    Consider the decreasing triples
    \[
    a=(z,y,x),
    \qquad
    b=(z',y',x').
    \]
    They have the same total sums by \eqref{eq:scalar-sums}. 
    Since $x<x'$, we have $z+y>z'+y'$.
    Thus, if $z\geq z'$, then $a$ majorizes $b$, and Karamata's inequality gives
    \[
    f(z)+f(y)+f(x)
    >
    f(z')+f(y')+f(x'),
    \]
    which contradicts \eqref{eq:f-sums}. 
    Hence $z<z'$.
    
    From $x<x'$ and $z<z'$, we clearly have $y'<y$.
    The lemma then follows from the fact that $(x,y,z)$ and $(x',y',z')$ are strictly ordered triples.
    \qedhere

\end{proof}

\bigskip

\subsection{The first key estimate}

Recall Definition \ref{Th} that for a natural number $D\in[1, N]$, $T_X(D)$ counts the number of six-tuples
\[
x < x' < y' < y < z < z',
\qquad
x,x',y,y',z,z' \in X,
\]
such that $\gamma(x)+\gamma(y)+\gamma(z)=\gamma(x')+\gamma(y')+\gamma(z')$, and
\[
\operatorname{gap}_X(x,x'),\
\operatorname{gap}_X(y,y'), \
\operatorname{gap}_X(z,z') \leq D.
\]

\begin{proposition} 
    \label{telescoping-prop}
    For every integer $1\leq D\leq N$,
    \begin{equation}
        \nonumber
        J_3(\gamma(X))\ll\frac{N^4}{D}+\frac ND T_X(D).
    \end{equation}
    Consequently,
    \begin{equation}
        \label{first-key}
        J(N)\ll\frac{N^4}{D}+\frac ND T(N,D).
    \end{equation}
\end{proposition}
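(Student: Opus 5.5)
The plan is to write $J_3(\gamma(X))$ as a sum of squares of representation numbers, and then use the interlacing structure \eqref{interlacing-property} of each fibre $\Omega(p)$ to show that most pairs of triples in a fibre are ``close'', in the sense of Definition \ref{Th}.

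\textbf{Step 1: discard degenerate solutions.} First I separate the degenerate solutions, namely those in which one of the two triples has a repeated entry.

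Suppose the left triple has a repetition; it can be chosen in $O(N^2)$ ways. Next choose $x'$ among $N$ values. The remaining equation is $\gamma(y')+\gamma(z')=s$ with $s$ fixed. By the case $n=2$ of Karamata's inequality (Lemma \ref{karamata}) together with equality of first coordinates, this equation has at most one solution up to ordering. So the degenerate solutions number $O(N^3)$, which is $\ll N^4/D$.

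\textbf{Step 2: reduce to fibres.} For the nondegenerate solutions, each unordered triple gives $6$ orderings. Hence this part of $J_3$ is at most $36\sum_p n_p^2$, where $n_p=|\Omega(p)|$ and $\sum_p n_p\le N^3$.

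\textbf{Step 3: chunk each fibre.} Fix $p$ and list $\Omega(p)$ as in \eqref{interlacing-property}, with indices $1,\dots,n$. Lemma \ref{interlacing-lem} shows that the three sequences $x_i$, $y_i$, $z_i$ are monotone in $i$. Define the step weights
\[
w_i=\gap_X(x_i,x_{i+1})+\gap_X(y_{i+1},y_i)+\gap_X(z_i,z_{i+1}).
\]
By monotonicity these telescope, so $\sum_i w_i\le 3N$. Moreover, for $i<j$ each of the three gaps between the $i$-th and $j$-th triples is at most $\sum_{i\le l<j}w_l$.

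I cut the index set $\{1,\dots,n\}$ greedily into consecutive chunks, each with internal total weight at most $D$. Every maximal chunk, together with the next step, has weight exceeding $D$. Hence the number of chunks $m$ satisfies $m\le 6N/D+1\ll N/D$.

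Within a chunk, any pair $i<j$ has all three gaps at most $D$. The six-tuple $(x_i,x_j,y_j,y_i,z_i,z_j)$ then satisfies \eqref{condition-1} and is counted by $T_X(D)$. Distinct $(p,i,j)$ give distinct six-tuples.

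\textbf{Step 4: Cauchy--Schwarz and summation.} Let $c_1,\dots,c_m$ be the chunk sizes. By Cauchy--Schwarz,
\[
n_p^2=\Bigl(\sum c_k\Bigr)^2\le m\sum c_k^2 \ll \frac ND\Bigl(n_p+2\#\{\text{good pairs in }\Omega(p)\}\Bigr).
\]
Summing over $p$ gives
\[
\sum_p n_p^2\ll \frac ND\bigl(N^3+T_X(D)\bigr).
\]
Adding the degenerate part from Step 1 yields the proposition. Taking the supremum over $X$ gives \eqref{first-key}.

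The main obstacle is Step 3: one must choose the right notion of closeness so that pairs inside a chunk satisfy all three gap constraints simultaneously. The point is to combine the three gaps into the single weight $w_i$, whose telescoping bound $3N$ comes directly from the monotonicity guaranteed by Lemma \ref{interlacing-lem}.
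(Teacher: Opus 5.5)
Your proof is correct, and it takes a different counting route from the paper.

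\textbf{The paper's route.} It splits the fibres into two cases. Fibres with $|\Omega(p)|<300N/D$ are handled trivially. For large fibres, it averages over index separations $k\in[L+1,2L]$ with $L=\lfloor nD/(100N)\rfloor$. For each $k$, the telescoping bound $\sum_j(\operatorname{rk}_X(x_{j+k})-\operatorname{rk}_X(x_j))\le kN$ limits the number of pairs whose $x$-gap exceeds $D$. A union bound over the three coordinates then shows that at most half of the roughly $nL$ pairs are bad, so there are $\gg n^2D/N$ good pairs.

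\textbf{Your route.} You merge the three gaps into a single additive weight $w_i$ with total at most $3N$. You then cut each fibre greedily into $m\ll N/D$ consecutive blocks of internal weight at most $D$, and apply Cauchy--Schwarz: $n_p^2\le m\sum_k c_k^2=m(n_p+2G_p)$.

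\textbf{What each buys.} Your version treats all fibres uniformly, with no threshold or constants to tune. The $N^4/D$ term comes out of the diagonal contribution $\sum_p n_p\le N^3$ instead of a separate small-fibre case. Your bound for degenerate solutions is a direct $O(N^3)$ count via uniqueness of two-point sums. This is cruder than the paper's $\sum_p\rho(p)^2\ll N^2$, which uses strict convexity of $x\mapsto 2f(x)+f(p_1-2x)$. Since $D\le N$, it is still absorbed into $N^4/D$. Both arguments rest on the same inputs: the monotonicity of the three coordinate sequences from Lemma \ref{interlacing-lem}, and the telescoping of rank gaps.
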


\begin{proof}
    Recall that 
    \begin{equation}
        \nonumber
        \Omega(p)=\{(x_j,y_j,z_j)\in X^3:x_j<y_j<z_j,\, \gamma(x_j)+\gamma(y_j)+\gamma(z_j)=p\}.
    \end{equation}
    Let $\nu(p)$ count the number of triples $(a_1, a_2, a_3)\in\gamma(X)^3$ such that $a_1+a_2+a_3=p$.
    Then clearly we have
    \begin{equation}
    \nonumber
        \nu(p)\ll|\Omega(p)|+\rho(p),
    \end{equation}
    where $\rho(p)$ counts the number of triples $(a_1, a_2, a_3)\in\gamma(X)^3$ with repeated entries such that $a_1+a_2+a_3=p$.
    Thus, we have
    \begin{equation}
        \label{J3-twoterms}
        J_3(\gamma(X))=\sum_p\nu(p)^2\ll\sum_p|\Omega(p)|^2+\sum_p\rho(p)^2.
    \end{equation}
    Let $p=(p_1,p_2)$.
    Up to permutations, $\rho(p)$ counts the number of triples $(x,x,y)\in X^3$ such that $2f(x)+f(y)=p_2$ and $2x+y=p_1$.
    Thus, we have
    \begin{equation}
        \label{repeated-entry}
        2f(x)+f(p_1-2x)=p_2.
    \end{equation}
    Note that, as a function of $x$, $2f(x)+f(p_1-2x)$ is also strictly convex.
    Thus, there are at most two solutions to \eqref{repeated-entry}, which shows that $\rho(p)\ll1$.
    Since $\sum\rho(p)\ll N^2$, we obtain
    \begin{equation}
        \label{repeated-estimate}
        \sum_p\rho(p)^2\ll N^2.
    \end{equation}
    
    \medskip 
    
    Regarding the first term on the right-hand side of \eqref{J3-twoterms}, let \(C > 0\) be a sufficiently large  absolute constant to be chosen momentarily, and define 
    \begin{equation}
        \nonumber
        E_+:=\{p: |\Omega(p)|\geq C(N/D)\}, \qquad E_-:=\{p: |\Omega(p)|< C(N/D)\}.
    \end{equation}
    For each $p\in E_+$, by Lemma \ref{interlacing-lem}, we can organize the set $\Omega(p)$ as
    \[
    x_1<\cdots <x_n<y_n<y_{n-1}<\cdots<y_1<z_1<z_2<\cdots<z_n,\quad n=|\Omega(p)|.
    \]
    Observe that for any \(1 \leq k < n \) we have
    \[
    \sum _{j = 1}^{n - k} \left( \operatorname{rk}_X(x_{j+k})
    - \operatorname{rk}_X(x_j) \right) \leq k N
    ,\]
    and thus,  for each level \(L \geq 3\) we have
    \[
        \sum _{k = L+1} ^{2L} \sum _{j = 1}^{n - k} \left( \operatorname{rk}_X(x_{j+k})- \operatorname{rk}_X(x_j) \right) \leq 2 L^2 N
    .\]
    Therefore the number of pairs of triples \((x_j,y_j,z_j)\) and \((x_{j+k},y_{j+k},z_{j+k})\) with \(L+1 \leq k \leq 2L \) and  \( \operatorname{gap}_X(x_j,x_{j+k}) \geq D\) is at most \(2 L^2 N / D\). Repeating the argument for \(y,z\), we find that there are at most \(6 L^2 N / D\) many \(k\)-separated triples, for which at least one of the rank gaps exceeds \(D\), for \(L+1 \leq k \leq 2L\). 
    
    Let \(M\) be the number of pairs of such \(k\)-separated triples with no rank gap restrictions and $L+1\leq k\leq 2L$. 
    Note that \(nL/2\leq  M \leq nL\).
    Take
    \[
    L  =\Big\lfloor\frac{nD}{100N}\Big\rfloor 
    \]
    so that
    \[
    6L ^2 N /D \leq M/2,
    \]
    and then choose \(C =300\) so that $n\geq 300(N/D)$ and hence \(L \geq 3\).
    
    \smallskip 
    
    With these appropriate constants, we conclude that there are
    \[
    \geq M/2 \gg n ^2 \cdot D/N  
    \]
    pairs of triples \((x_j,y_j,z_j)\) and \((x_{j+k},y_{j+k},z_{j+k})\) such that
    \[
    \operatorname{gap}_X(x_j, x_{j+k}),\quad
    \operatorname{gap}_X(y_j, y_{j+k}), \quad
    \operatorname{gap}_X(z_j,z_{j+k}) \leq D.
    \]
    Consequently, since $|\Omega(p)|^2=n^2$, we have
    \begin{equation}
    \nonumber
        \sum_{p\in E_+}|\Omega(p)|^2\ll \frac{N}{D}T_X(D).
    \end{equation}
    Finally, we also have
    \begin{equation}
    \nonumber
        \sum_{p\in E_-}|\Omega(p)|^2\ll\frac{N}{D}\sum_{p\in E_-}|\Omega(p)| \leq \frac{N^4}{D}.
    \end{equation}
    Plug these two estimates and \eqref{repeated-estimate} back into \eqref{J3-twoterms} to conclude the proof.
    \qedhere
    
\end{proof}

\bigskip

\subsection{Cone projections and $l^q(L^q)$ interpolation}
We use the convention
\begin{equation}
    \nonumber
    \widehat g(\xi)=\int_{\R^2}g(u)e^{-2\pi i u\cdot\xi}\,du.
\end{equation}

\medskip 

Let $H$ be the one-dimensional Hilbert transform defined as $\wh {Hg}(\xi)=-i\, {\rm sgn}(\xi)\wh g(\xi)$.
We use the classical estimate
\begin{equation}
    \label{Hilbert}
    \|Hg\|_{L^p(\R)}\leq C_p\|g\|_{L^p(\R)},\qquad 1<p<\infty,
\end{equation}
where $C_p$ depends only on $p$; see Stein~\cite{Stein}.

For a nonzero linear functional $\ell$ on $\R^2$, let $P_{\ell,+}$ be the Fourier multiplier with symbol $\Id_{\{\ell(\xi)>0\}}$. 
After a rotation, this is the one-dimensional Riesz projection in one coordinate, with the other coordinate acting as a parameter. 
Thus \eqref{Hilbert} and Fubini imply that, 
uniformly in the direction of $\ell$,
\begin{equation}
    \nonumber
    \|P_{\ell,+}g\|_{p}\leq C_p\|g\|_{p},\qquad 1<p<\infty.
\end{equation}
A planar two-sided cone $V$ of aperture $\leq \pi$ consists of two pieces, each of which is the intersection of two half-planes. 
The cone projection $P_V$, whose multiplier is $\Id_V$, is thus a sum of products of two commuting half-plane projections. 
Consequently,

\begin{lemma}
    \label{cone-multi-lem}
    Let $P_V$ be a multiplier operator whose multiplier is $\Id_V$.
    Then
    \begin{equation}
        \label{eq:cone-projection}
        \|P_V g\|_{p}\leq C_p\|g\|_{p},\qquad 1<p<\infty,
    \end{equation}
    with a constant independent of the location and aperture of $V$.
\end{lemma}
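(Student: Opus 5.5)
We reduce the cone projection to a composition of half-plane projections and bound each of these by the one-dimensional Hilbert transform. Write the two-sided cone $V$ of aperture at most $\pi$ as $V_+\cup V_-$, where $V_-=-V_+$ and $V_+$ is an open sector. Because the aperture is at most $\pi$, $V_+$ is the intersection of two open half-planes: $V_+=\{\ell_1>0\}\cap\{\ell_2>0\}$ for suitable nonzero linear functionals $\ell_1,\ell_2$. If the aperture equals $\pi$, $V_+$ is itself a half-plane and we may take $\ell_2=\ell_1$. Then $V_-=\{\ell_1<0\}\cap\{\ell_2<0\}$. Since the pieces are disjoint up to the measure-zero boundary, which is irrelevant for multipliers acting on $L^2\cap L^p$, we have
\[
\Id_V=\Id_{\{\ell_1>0\}}\Id_{\{\ell_2>0\}}+\Id_{\{-\ell_1>0\}}\Id_{\{-\ell_2>0\}}
\]
almost everywhere. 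Hence $P_V=P_{\ell_1,+}P_{\ell_2,+}+P_{-\ell_1,+}P_{-\ell_2,+}$ as operators on Schwartz functions. Each factor is a Fourier multiplier, so the factors commute.

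The key step is the uniform bound for a single half-plane projection. Rotate coordinates so that $\ell(\xi)=c\,\xi_1$ with $c>0$. Rotations preserve $L^p$ norms and commute with the Fourier transform, so the rotated operator has symbol $\Id_{\{\xi_1>0\}}=\tfrac12(1+\operatorname{sgn}\xi_1)$. This operator is $\tfrac12(I+iH_1)$, where $H_1$ is the Hilbert transform acting in the first variable. For fixed $u_2$, apply \eqref{Hilbert} to $u_1\mapsto g(u_1,u_2)$, raise to the $p$-th power and integrate in $u_2$ (Fubini). This gives $\|P_{\ell,+}g\|_p\le \tfrac12(1+C_p)\|g\|_p$, with a constant independent of $\ell$, since neither the rotation nor the positive scaling $c$ changes the symbol or the norm.

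Combining these, $\|P_Vg\|_p\le 2\bigl(\tfrac12(1+C_p)\bigr)^2\|g\|_p$ for Schwartz $g$, and the bound extends to all of $L^p$ by density. The constant depends only on $p$, not on the position or aperture of $V$. There is no serious obstacle. The only points needing care are that the aperture restriction $\le\pi$ is what makes each piece an intersection of exactly two half-planes, and that boundary rays, having measure zero, do not affect the multiplier.
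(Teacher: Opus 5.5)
Your proposal is correct and follows the paper's argument. You split $V$ into two opposite sectors, each an intersection of two half-planes. You write $P_V$ as a sum of products of two commuting half-plane projections. You bound each half-plane projection uniformly via a rotation, the one-dimensional Hilbert transform, and Fubini. You also fill in details the paper leaves implicit, namely the identity $\Id_{\{\xi_1>0\}}=\tfrac12(1+\operatorname{sgn}\xi_1)$ giving $\tfrac12(I+iH_1)$, the aperture-$\pi$ case, and the density extension, and these all check out.
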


\medskip

Next, we recall the following classical interpolation theorem.

\begin{theorem}[Riesz-Thorin]
    Let $(\Omega,m)$ and $(Y,\nu)$ be measure spaces, let $1\leq q_0,q_1<\infty$, and let $S$ be linear on $L^{q_0}(\Omega)+L^{q_1}(\Omega)$. 
    If
    \[
    \|Sg\|_{L^{q_i}(Y)}\leq M_i\|g\|_{L^{q_i}(\Omega)},\qquad i=0,1,
    \]
    and $0<\theta<1$ satisfies
    \[
    \frac1r=\frac{1-\theta}{q_0}+\frac\theta{q_1},
    \]
    then
    \[
    \|Sg\|_{L^r(Y)}\leq M_0^{1-\theta}M_1^\theta\|g\|_{L^r(\Omega)}.
    \]
\end{theorem}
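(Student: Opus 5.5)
The plan is the classical complex-interpolation argument of Thorin, built on the Hadamard three lines lemma. By density it suffices to prove $\|Sg\|_{L^r(Y)}\le M_0^{1-\theta}M_1^\theta$ for simple functions $g$ on $\Omega$ with $\|g\|_{L^r}=1$. By duality, it suffices to bound $\left|\int_Y Sg\cdot h\,d\nu\right|$ by $M_0^{1-\theta}M_1^{\theta}$ for all simple $h$ on $Y$ with $\|h\|_{L^{r'}}=1$. Here we note $r<\infty$ because $q_0,q_1<\infty$. If $r'=\infty$, that is $r=1$, then $q_0=q_1=1$ and there is nothing to prove. We should also check that the duality pairing against simple $h$ recovers the $L^r$ norm of $Sg\in L^{q_0}+L^{q_1}$; this can be handled by a standard $\sigma$-finiteness/truncation remark.

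Write $g=\sum_k a_k e^{i\alpha_k}\Id_{A_k}$ and $h=\sum_l b_l e^{i\beta_l}\Id_{B_l}$ with $a_k,b_l>0$ and disjoint supports. For $\zeta$ in the closed strip $0\le \operatorname{Re}\zeta\le 1$, define
\[
\frac{1}{p(\zeta)}=\frac{1-\zeta}{q_0}+\frac{\zeta}{q_1},\qquad \frac{1}{p'(\zeta)}=\frac{1-\zeta}{q_0'}+\frac{\zeta}{q_1'}.
\]
Set
\[
g_\zeta=\sum_k a_k^{r/p(\zeta)}e^{i\alpha_k}\Id_{A_k},\qquad h_\zeta=\sum_l b_l^{r'/p'(\zeta)}e^{i\beta_l}\Id_{B_l},
\]
so that $g_\theta=g$ and $h_\theta=h$. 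Let
\[
F(\zeta)=\int_Y S g_\zeta\, h_\zeta\,d\nu=\sum_{k,l}a_k^{r/p(\zeta)}b_l^{r'/p'(\zeta)}e^{i(\alpha_k+\beta_l)}\int_Y S(\Id_{A_k})\Id_{B_l}\,d\nu.
\]
By linearity of $S$ this is a finite sum of exponentials in $\zeta$. Hence $F$ is entire and bounded on the strip.

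On the line $\operatorname{Re}\zeta=0$ we have $|g_\zeta|=|g|^{r/q_0}$ and $|h_\zeta|=|h|^{r'/q_0'}$. Therefore $\|g_\zeta\|_{q_0}=1$ and $\|h_\zeta\|_{q_0'}=1$, and Hölder together with the hypothesis give $|F(\zeta)|\le M_0$. Similarly $|F(\zeta)|\le M_1$ on $\operatorname{Re}\zeta=1$.

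The three lines lemma then gives $|F(\theta)|\le M_0^{1-\theta}M_1^\theta$, which is the claim. The three lines lemma itself follows by applying the maximum principle to $F(\zeta)M_0^{\zeta-1}M_1^{-\zeta}e^{\epsilon\zeta^2}$ and letting $\epsilon\to0$.

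The only delicate step is the bookkeeping of exponents in the endpoint cases. One is $q_i'=\infty$ when $q_i=1$; there we must interpret $r'/p'(\zeta)$ correctly and, on that boundary line, take $h_\zeta$ unimodular on its support. The other is the density extension from simple $g$ to all of $L^r$, which uses that $L^r$ sits inside $L^{q_0}+L^{q_1}$ continuously. I expect this last point, rather than the analytic core, to need the most care. Since the theorem is classical, in the paper one may simply cite it.
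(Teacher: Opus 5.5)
The paper gives no proof of this statement: it is recalled as the classical Riesz--Thorin theorem and used as a black box in the proof of the $l^q(L^q)$ interpolation lemma, and your argument is the standard Thorin proof via the three lines lemma, so it is correct and is the classical proof being cited. The details you leave implicit are routine: if $M_0$ or $M_1$ vanishes, run the three lines lemma with $M_i+\epsilon$ in place of $M_i$. For the extension from simple functions to $L^r$, note that $S$ is bounded on $L^{q_0}+L^{q_1}$ with norm at most $\max(M_0,M_1)$; hence $Sg_n\to Sg$ in $L^{q_0}+L^{q_1}$, a subsequence converges $\nu$-a.e., and Fatou's lemma gives the bound.
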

The next result is a classical consequence; we include its proof for completeness.

\begin{lemma}[Interpolation for $l^q(L^q)$ spaces]\label{lem:synthesis-RT}
    Let $P_1,\ldots,P_K$ be linear operators defined on $L^2(\R^2)+L^p(\R^2)$, where $p>3$. 
    Suppose that
    \begin{equation}
        \label{eq:L2-synthesis}
        \Big\|\sum_{j=1}^KP_jf_j\Big\|_{2}
        \leq B_2\Big(\sum_{j=1}^K\|f_j\|_{2}^2\Big)^{1/2} 
    \end{equation}
    for every $f_j\in L^2$, and that
    \begin{equation}
        \label{eq:Lp-individual}
        \sup_{1\leq j\leq K}\|P_j\|_{L^p\to L^p}\leq B_p. 
    \end{equation}
    Set $t=\frac{p}{3(p-2)}$ so that $1/3=(1-t)/2+t/p$. 
    Then
    \begin{equation}
        \nonumber
        \Big\|\sum_{j=1}^KP_jf_j\Big\|_{3}
        \leq C_pB_2^{1-t}B_p^tK^{t(1-\frac{1}{p})}
        \Big(\sum_{j=1}^K\|f_j\|_{3}^3\Big)^{1/3}.
    \end{equation}
\end{lemma}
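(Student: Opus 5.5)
We interpolate the synthesis operator $S:(f_1,\dots,f_K)\mapsto\sum_j P_jf_j$, viewed as a single linear operator from $\ell^q(L^q)$ (functions on $\Omega=\{1,\dots,K\}\times\R^2$ with counting $\times$ Lebesgue measure) into $L^q(\R^2)$. The point is that $\|f\|_{L^q(\Omega)}=(\sum_j\|f_j\|_q^q)^{1/q}$, so Riesz--Thorin applies directly once the two endpoint bounds are available. The target exponent $3$ lies between $2$ and $p$ because $p>3$, and $t\in(0,1)$ is defined by $1/3=(1-t)/2+t/p$.

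\textbf{Endpoint $q_0=2$.} Hypothesis \eqref{eq:L2-synthesis} states exactly $\|Sf\|_{L^2}\le B_2\|f\|_{L^2(\Omega)}$, so $M_0=B_2$.

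\textbf{Endpoint $q_1=p$.} We use the triangle inequality, \eqref{eq:Lp-individual}, and H\"older in $j$:
\[
\Big\|\sum_j P_jf_j\Big\|_p\le B_p\sum_j\|f_j\|_p\le B_pK^{1-\frac1p}\Big(\sum_j\|f_j\|_p^p\Big)^{1/p}.
\]
Hence $M_1=B_pK^{1-1/p}$.

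\textbf{Interpolation.} Riesz--Thorin with $\theta=t$ gives
\[
\|Sf\|_{3}\le B_2^{1-t}\big(B_pK^{1-1/p}\big)^t\Big(\sum_j\|f_j\|_3^3\Big)^{1/3},
\]
which is the claim, even with $C_p=1$.

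The only step needing care is the domain issue. The $P_j$ are defined on $L^2+L^p$, so $S$ is defined on $L^2(\Omega)+L^p(\Omega)$, as the theorem requires. Linearity of $S$ is immediate. If one prefers, one can first work with simple functions $f$ and then extend by density, since $3<\infty$. We do not expect any genuine obstacle: the main thing to check is that the theorem is applied to the product measure space $\Omega$ rather than to each $P_j$ separately.
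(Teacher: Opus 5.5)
Your proposal is correct and is essentially the paper's proof: you identify $\ell^q_K(L^q(\R^2))$ with $L^q(\{1,\dots,K\}\times\R^2)$, take $M_0=B_2$ at $q=2$ and $M_1=B_pK^{1-1/p}$ at $q=p$ (triangle inequality plus H\"older in $j$), and apply scalar Riesz--Thorin with $\theta=t$. As you note, this gives the bound without any extra factor, i.e.\ with $C_p=1$, which matches what the paper's proof actually produces.
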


\begin{proof}
    Let $I_K=\{1,\ldots,K\}$ and equip
    \[
    \Omega_K=I_K\times\R^2
    \]
    with the product of counting measure and Lebesgue measure. For a sequence $F=(f_j)_{j=1}^K$, define a scalar function on $\Omega_K$ by
    \[
    (UF)(j,x)=f_j(x).
    \]
    For every $1\leq s<\infty$, Tonelli's theorem gives
    \[
    \|UF\|_{L^s(\Omega_K)}^s
    =\int_{\Omega_K}|UF|^s\,d(\#\times dx)
    =\sum_{j=1}^K\int_{\R^2}|f_j(x)|^s\,dx
    =\sum_{j=1}^K\|f_j\|_{L^s}^s.
    \]
    Thus $U$ is an isometric identification
    \[
    U:\ell_K^s(L^s(\R^2))\longrightarrow L^s(\Omega_K),
    \]
    whose inverse sends a scalar function $g(j,x)$ to the sequence of its fibers $g_j(x)=g(j,x)$.
    Define the linear operator
    \[
    TF=\sum_{j=1}^KP_jf_j
    \]
    and the scalar linear operator
    \[
    S=TU^{-1}
    \]
    on $L^2(\Omega_K)+L^p(\Omega_K)$. 
    By \eqref{eq:L2-synthesis},
    \[
    \|Sg\|_{2}\leq B_2\|g\|_{L^2(\Omega_K)}.
    \]
    At the other endpoint, the triangle inequality, \eqref{eq:Lp-individual}, and H\"older's inequality give
    \[
    \|TF\|_{p}
    \leq\sum_{j=1}^K\|P_jf_j\|_{p}
    \leq B_p\sum_{j=1}^K\|f_j\|_{p}
    \leq B_pK^{1-\frac{1}{p}}\Big(\sum_{j=1}^K\|f_j\|_{p}^p\Big)^{1/p}.
    \]
    Since $U$ is an isometry, this is
    \[
    \|Sg\|_{p}\leq B_pK^{1-\frac{1}{p}}\|g\|_{L^p(\Omega_K)}.
    \]
    Apply the scalar Riesz--Thorin theorem to $S$ between exponents $2$ and $p$. Because
    \[
    \frac13=\frac{1-t}{2}+\frac tp,
    \]
    it follows that
    \[
    \|Sg\|_{3}\leq B_2^{1-t}(B_pK^{1-\frac{1}{p}})^t\|g\|_{L^3(\Omega_K)}.
    \]
    Finally take $g=UF$. Since $S(UF)=TF$ and $U$ is an isometry,
    \[
    \Big\|\sum_{j=1}^KP_jf_j\Big\|_{3}
    \leq B_2^{1-t}B_p^tK^{t(1-1/p)}
    \Big(\sum_{j=1}^K\|f_j\|_{3}^3\Big)^{1/3}. \qedhere
    \]
\end{proof}

\medskip

We apply this to functions supported on pairwise disjoint cones. The argument only relies on $L^2$ orthogonality. 
In recent literature, this type of inequality has been referred to as {\em flat decoupling}.
The negligible loss $K^\eta$ comes from using a finite $p$.

\begin{corollary}[Disjoint-cone $l^3L^3$ flat decoupling]\label{cor:ordered-cones}
    Let $g_1,\ldots,g_K$ have Fourier supports in finitely overlapping open cones. Then, for every $\eta>0$,
    \begin{equation}
        \label{eq:ordered-cone-synthesis}
        \Big\|\sum_{j=1}^Kg_j\Big\|_{3}^3
        \ll_\eta K^{1+\eta}\sum_{j=1}^K\|g_j\|_{3}^3
        \ll_\eta K^{2+\eta}\max_j\|g_j\|_{3}^3. 
    \end{equation}
\end{corollary}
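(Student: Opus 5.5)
The plan is to apply Lemma~\ref{lem:synthesis-RT} with $P_j=P_{V_j}$, the cone projections onto the open cones $V_j$ containing the Fourier supports of $g_j$, and $f_j=g_j$. Then $P_jg_j=g_j$, and the left side of \eqref{eq:ordered-cone-synthesis} is exactly $\|\sum_j P_jf_j\|_3^3$.

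The two hypotheses are checked as follows.

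\textbf{$L^p$ hypothesis.} Lemma~\ref{cone-multi-lem} gives \eqref{eq:Lp-individual} with $B_p=C_p$, uniformly in the cones. If a cone has aperture larger than $\pi$, I would first split it into boundedly many subcones of aperture at most $\pi$ and split $g_j$ accordingly. This only changes $K$ by a constant factor.

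\textbf{$L^2$ hypothesis.} For arbitrary $f_j\in L^2$, Plancherel gives
\[
\Big\|\sum_j \Id_{V_j}\widehat{f_j}\Big\|_2^2\le \Big\|\sum_j \Id_{V_j}\Big\|_\infty\sum_j\|\widehat{f_j}\|_2^2 .
\]
This uses the pointwise Cauchy--Schwarz bound $|\sum_j \Id_{V_j}\widehat f_j|^2\le (\sum_j \Id_{V_j})(\sum_j|\widehat f_j|^2)$. Finite overlap then gives \eqref{eq:L2-synthesis} with $B_2\ll 1$.

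Lemma~\ref{lem:synthesis-RT} therefore yields
\[
\Big\|\sum_j g_j\Big\|_3\ll_p K^{t(1-1/p)}\Big(\sum_j\|g_j\|_3^3\Big)^{1/3}, \qquad t=\frac{p}{3(p-2)}.
\]
Cubing, the exponent of $K$ is $3t(1-1/p)=\frac{p-1}{p-2}$, which tends to $1$ as $p\to\infty$. Given $\eta>0$, I choose $p$ large enough that $\frac{p-1}{p-2}\le 1+\eta$; the implicit constant then depends only on $\eta$ through $C_p$. This proves the first inequality in \eqref{eq:ordered-cone-synthesis}. The second follows trivially from $\sum_j\|g_j\|_3^3\le K\max_j\|g_j\|_3^3$.

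The only genuinely subtle point is ensuring that the $L^2$ bound holds for \emph{arbitrary} $f_j$, not just the frequency-localized $g_j$, since Riesz--Thorin needs it on the whole space. This is why the projections $P_{V_j}$ are inserted: they restore orthogonality for general inputs, and this is exactly where finite overlap of the cones is used.
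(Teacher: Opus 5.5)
Your proposal is correct and follows the paper's proof: it uses the cone projections $P_j$ with $P_jg_j=g_j$, obtains $B_2\ll 1$ from Plancherel and finite overlap, obtains $B_p$ from the cone-projection lemma, and applies the $l^q(L^q)$ interpolation lemma, choosing $p$ large so that $3t(1-\frac1p)=\frac{p-1}{p-2}\le 1+\eta$. Your pointwise Cauchy--Schwarz justification of the $L^2$ step and the remark about splitting cones of large aperture only make explicit details that the paper leaves implicit.
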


\begin{proof}
    Let $P_j$ be the projection onto the cone supporting $g_j$. Since the cones are finitely overlapping, Plancherel's inequality gives
    \[
    \Big\|\sum_jP_jf_j\Big\|_{2}^2\ll\sum_j\|P_jf_j\|_{2}^2\leq\sum_j\|f_j\|_{2}^2,
    \]
    so \eqref{eq:L2-synthesis} holds with $B_2=1$. Also $P_jg_j=g_j$, and \eqref{eq:cone-projection} gives \eqref{eq:Lp-individual} with a constant depending only on $p$. Lemma~\ref{lem:synthesis-RT} therefore yields
    \[
    \Big\|\sum_jg_j\Big\|_{3}^3
    \leq C_pK^{3t(1-\frac{1}{p})}\sum_j\|g_j\|_{3}^3.
    \]
    Here $3t(1-\frac1p)=\frac{p-1}{p-2}=1+\frac1{p-2}$.
    Choose $p$ sufficiently large that $1/(p-2)\leq\eta$. This proves the first inequality in \eqref{eq:ordered-cone-synthesis}.
\end{proof}

\bigskip  

\section{Proof of the main theorem}

We are going to pick a small positive number $\delta$, and consider the $\delta$-neighborhood of the points in $\gamma(X)$.
It should be noted that while $\delta$ is allowed to be dependent on $\gamma(X)$, its use will not lead to any loss that depends on $\delta$. The use of this parameter serves the purpose of creating an essentially compact spatial domain where all exponential sums become integrable. It accounts for the lack of periodicity of exponential sums associated with generic sets $\gamma(X)$.

\medskip

\subsection{Admissible thickening}

\begin{definition}
    \label{admissible-def}
    A number $\delta>0$ is called \emph{admissible for $X$} if the following hold:
    \begin{enumerate}
        \item The points in $X$ are $\delta$-separated.
        \item For all $a_j, b_j\in \gamma(X)$, 
        \begin{equation}
            \nonumber
            a_1+a_2+a_3=b_1+b_2+b_3
        \end{equation}
        if and only if 
        \begin{equation}
            \nonumber
            \{N_\delta(a_1)+N_\delta(a_2)+N_\delta(a_3)\}\cap \{N_\delta(b_1)+N_\delta(b_2)+N_\delta(b_3)\}\not=\varnothing.
        \end{equation}
        \item 
        We have $N_\delta(E_j)\subset V_j$ (recall Lemma \ref{cone-for-difference} for $E_j,V_j$).
    \end{enumerate}
    
\end{definition}

Clearly, for a finite $X$, there exists an admissible $\delta$. 

\medskip

Now, for any finite $X$, we assign an admissible $\delta$ to it.
Choose $\widehat\vp\in C_c^\infty(\R^2)$ with
\begin{equation}
    \nonumber
    \supp(\widehat\vp)\subset B(0,\delta/100),
\end{equation}
and normalize its inverse Fourier transform so that
\begin{equation}
    \nonumber
    \|\vp\|_{L^6(\R^2)}=1.
\end{equation}
For $Y\subset X$, define
\begin{equation}
    \label{FY}
    F_Y(u)=\vp(u)\sum_{y\in Y} e^{2\pi i\gamma(y)\cdot u},
\end{equation}
or equivalently, $\widehat F_Y(\xi)=\sum_{y\in Y}\widehat\vp(\xi-\gamma(y))$.
Formally taking $\widehat{\vp}=\delta_0$ would recover the Dirac mass, a convenient heuristic for the reader to keep in mind.

\medskip

If $\delta$ is admissible for $X$, then item (2) of Definition \ref{admissible-def} implies that, for every $Y\subset X$, we have
\begin{equation}
    \label{J3-F}
    \|F_Y\|_{L^6}^6=J_3(\gamma(Y)). 
\end{equation}
Moreover, whenever $\omega$ is a nonzero difference of two triple sums from $\gamma(X)$,
\begin{equation}
    \nonumber
    \int_{\R^2}|\vp(u)|^6 e^{2\pi i\omega\cdot u}\,du=0.
\end{equation}
For $\omega=0$, the integral equals $1$.

\medskip

Therefore, since $\delta$ is admissible for $X$, by Proposition \ref{telescoping-prop} and \eqref{J3-F}, we have
\begin{lemma}
    \label{telescoping-lemma-delta}
    For any $1\leq D\leq N$, we have 
    \[
    \|F_X\|_6^6\ll \frac{N^4}{D}+ \frac{N}{D} T_X(D).
    \]
\end{lemma}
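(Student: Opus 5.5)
The statement is a direct transcription of Proposition \ref{telescoping-prop} into the language of the thickened exponential sums. The plan is to first identify $\|F_X\|_6^6$ with $J_3(\gamma(X))$ and then quote the combinatorial estimate unchanged.

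For the identity, expand $|F_X|^6 = F_X^3\overline{F_X}^3$. This gives
\[
\|F_X\|_6^6=\sum_{x_1,\dots,x_6\in X}\int_{\R^2}|\vp(u)|^6 e^{2\pi i\omega\cdot u}\,du,\qquad \omega=\sum_{i=1}^3\gamma(x_i)-\sum_{i=4}^6\gamma(x_i).
\]
The Fourier transform of $|\vp|^6=\vp^3\bar\vp^3$ is supported in $B(0,6\delta/100)$. Since $\delta$ is admissible, item (2) of Definition \ref{admissible-def} says that the $\delta$-thickened triple sums of two distinct values of $\sum\gamma(x_i)$ do not meet. Hence every nonzero $\omega$ in the sum has $|\omega|>6\delta/100$, so the corresponding integral vanishes. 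For $\omega=0$ the integral equals $\|\vp\|_6^6=1$. Therefore $\|F_X\|_6^6=J_3(\gamma(X))$, which is \eqref{J3-F} with $Y=X$.

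Proposition \ref{telescoping-prop} holds for every finite $X$ with $|X|=N$ and every $1\le D\le N$, and it does not involve $\delta$. Applying it gives
\[
\|F_X\|_6^6=J_3(\gamma(X))\ll \frac{N^4}{D}+\frac ND T_X(D),
\]
with an absolute implicit constant. This is the claim.

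The only point needing care is the vanishing of the off-diagonal integrals. One must check that the support radius $\delta/100$ is small enough that the sixfold convolution support $B(0,6\delta/100)$ cannot contain a nonzero difference $\omega$. This is exactly what admissibility gives: item (2) with disjoint $\delta$-neighbourhoods forces $|\omega|>6\delta$ up to harmless constants. No further obstacle arises, because the lemma is a restatement designed to move the combinatorial input into the analytic setting used for the second key estimate.
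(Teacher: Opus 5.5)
Your proposal is correct and follows the paper's route: the paper deduces this lemma directly from \eqref{J3-F}, i.e.\ $\|F_X\|_6^6=J_3(\gamma(X))$ via item (2) of admissibility, combined with Proposition \ref{telescoping-prop}. Your proof differs only in that you write out the Fourier-support check behind \eqref{J3-F}, namely that $\widehat{|\vp|^6}$ is supported in $B(0,6\delta/100)$ while every nonzero $\omega$ has $|\omega|\ge 6\delta$. The paper asserts this step without proof.
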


\bigskip 

\subsection{The second key estimate}

Recall the partition of $X$ into $\{I_j\}$ in \eqref{partition}.
Define
\begin{equation}
    \nonumber
    X_j:= I_j\cup I_{j+1},
\end{equation}
with the convention that $I_{h+1}=\varnothing$.

\begin{lemma}
    \label{Sh-G-lem}
    Let $G=\sum_j(|F_{X_j}|^2-|X_j|\cdot |\vp|^2)$.
    Then
    \begin{equation}
        \label{Sh-G}
        T_X(D)\ll\int G^3.
    \end{equation}
\end{lemma}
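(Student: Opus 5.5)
The plan is to expand $G^3$ pointwise as a trigonometric sum with nonnegative weights, integrate it against $|\vp|^6$, and use admissibility to turn each integral into an exact count of additive coincidences. The inequality then follows from positivity, once every six-tuple counted by $T_X(D)$ is shown to appear among the terms.

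\emph{Step 1: expand $G$.} For each $j$, expanding the square in \eqref{FY} gives
\[
|F_{X_j}(u)|^2-|X_j|\,|\vp(u)|^2=|\vp(u)|^2\sum_{\substack{a,b\in X_j\\ a\neq b}}e^{2\pi i(\gamma(a)-\gamma(b))\cdot u}.
\]
This is because the diagonal terms $a=b$ contribute exactly $|X_j|\,|\vp|^2$. Summing over $j$,
\[
G(u)=|\vp(u)|^2\sum_{(a,b)}m(a,b)\,e^{2\pi i(\gamma(a)-\gamma(b))\cdot u}.
\]
Here $m(a,b)\in\{0,1,2\}$ is the number of indices $j$ with $a,b\in X_j$; the value $2$ can occur since consecutive $X_j$ share a block. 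In particular $G$ is real.

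\emph{Step 2: cube and integrate.} Cubing and integrating, we get
\[
\int G^3=\sum_{(a_1,b_1),(a_2,b_2),(a_3,b_3)}\Big(\prod_{i=1}^3 m(a_i,b_i)\Big)\int|\vp|^6e^{2\pi i\omega\cdot u}\,du,
\]
where $\omega=\sum_i\gamma(a_i)-\sum_i\gamma(b_i)$. This $\omega$ is a difference of two triple sums from $\gamma(X)$. By item (2) of admissibility and the remark following \eqref{J3-F}, together with $\|\vp\|_6=1$, each integral equals $1$ if $\omega=0$ and $0$ otherwise. Hence
\[
\int G^3=\sum \prod_i m(a_i,b_i)\,\Id_{\{\gamma(a_1)+\gamma(a_2)+\gamma(a_3)=\gamma(b_1)+\gamma(b_2)+\gamma(b_3)\}},
\]
which is a sum of nonnegative terms.

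\emph{Step 3: embed $T_X(D)$ into this sum.} Take any six-tuple $x<x'<y'<y<z<z'$ counted by $T_X(D)$, and set
\[
(a_1,b_1)=(x,x'),\qquad (a_2,b_2)=(y,y'),\qquad (a_3,b_3)=(z,z').
\]
Each pair consists of distinct elements with rank gap at most $D$. Since the blocks $I_j$ in \eqref{partition} are consecutive of size $D$, two elements whose ranks differ by at most $D$ lie in the same block or in adjacent blocks $I_j,I_{j+1}$. Therefore both elements lie in some $X_j$, so $m(a_i,b_i)\ge1$.

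The defining relation $\gamma(x)+\gamma(y)+\gamma(z)=\gamma(x')+\gamma(y')+\gamma(z')$ is exactly $\omega=0$. Distinct six-tuples give distinct index triples in the sum, so by positivity $T_X(D)\le\int G^3$, which is \eqref{Sh-G}.

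The only delicate point is Step 2: the vanishing of the off-diagonal integrals must be exact rather than approximate. This is precisely what the choice of an admissible $\delta$ and the support condition $\supp\widehat\vp\subset B(0,\delta/100)$ guarantee. One checks that the Fourier transform of $|\vp|^6$ is supported in $B(0,6\delta/100)$, which avoids every nonzero difference of triple sums by item (2). Everything else is bookkeeping of the multiplicities $m$, which only helps since they are at least $1$.
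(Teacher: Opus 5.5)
Your proof is correct and follows the paper's argument. You expand each $G_j$ with the diagonal removed, use admissibility so that $\int|\vp|^6e^{2\pi i\omega\cdot u}\,du=\Id_{\{\omega=0\}}$ holds exactly, and observe that each pair in a six-tuple counted by $T_X(D)$ lies in a common $X_j$. Your multiplicities $m(a,b)$ are only a repackaging of the paper's sum over $(j_1,j_2,j_3)$, and you also check the adjacent-block containment and the Fourier-support claim, which the paper leaves implicit.
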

\begin{proof}
    Let $G_j=|F_{X_j}|^2-|X_j|\cdot |\vp|^2$.
    By \eqref{FY}, we have
    \[
    G_j(u)
    =
    |\vp(u)|^2
    \sum_{\substack{a,b\in\gamma(X_j),\\a\neq b}}
    e^{2\pi i(a-b)\cdot u}.
    \]
    Therefore, we have
    \[
    \int G^3=
    \sum_{j_1,j_2,j_3}
    \#\left\{
    (a_1,a_2,a_3,b_1,b_2,b_3):
    \begin{array}{l}
        a_k\not= b_k,\quad a_k, b_k\in \gamma(X_{j_k}),\\[2mm]
        a_1+a_2+a_3=b_1+b_2+b_3
    \end{array}
    \right\}.
    \]
    Two triples contributing to $T_X(D)$ have coordinates in adjacent intervals $I_j$.
    This in particular implies \eqref{Sh-G}.
    \qedhere
    
\end{proof}

\begin{proposition}
    \label{ortho-prop}
    Let $G=\sum_j(|F_{X_j}|^2-|X_j|\cdot |\vp|^2)$.
    Recall the definition \eqref{partition} for $h$.
    Then for any $\eta>0$, there exists a $C_\eta$ such that
    \begin{equation}
        \|G\|_3^3\leq C_\eta \,h^{1+\eta}\sum_{j=1}^h\|F_{X_j}\|_6^6.
    \end{equation}
\end{proposition}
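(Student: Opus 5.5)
The plan is to apply Corollary \ref{cor:ordered-cones} to the functions $G_j=|F_{X_j}|^2-|X_j|\cdot|\vp|^2$ and then bound each $\|G_j\|_3^3$ by $\|F_{X_j}\|_6^6$.

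\emph{Step 1: Fourier support of $G_j$.} By the computation in the proof of Lemma \ref{Sh-G-lem},
\[
G_j(u)=|\vp(u)|^2\sum_{a\neq b\in\gamma(X_j)}e^{2\pi i(a-b)\cdot u}.
\]
So $\widehat{G_j}$ is supported in the union of the sets $(a-b)+B(0,\delta/50)$. Here $a-b$ ranges over the nonzero chord vectors of $\gamma(X_j)$, with both orientations. Since $|\vp|^2$ has Fourier transform supported in $B(0,\delta/50)$, this support lies in $N_\delta(E_j\cup E_{j+1}\cup(-E_j)\cup(-E_{j+1}))$, using the symmetry of the cones.

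By item (3) of admissibility, $N_\delta(E_j)\subset V_j$. Hence $\widehat{G_j}$ is supported in $V_j\cup V_{j+1}$, which is an open two-sided cone. Each point of the plane lies in at most two of the sets $V_j\cup V_{j+1}$, so these cones are finitely overlapping.

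One caveat: the chords between a point of $I_j$ and a point of $I_{j+1}$ are not in $E_j\cup E_{j+1}$. Their slopes, however, lie between the extreme slopes of $E_j$ and of $E_{j+1}$, by the monotonicity of $\sigma$. I would therefore replace $V_j\cup V_{j+1}$ by a slightly enlarged cone $W_j$ that still lies within $V_{j-1}\cup V_j\cup V_{j+1}\cup V_{j+2}$. To make this airtight, I would define $W_j$ through the chord-slope ranges, shrinking $\delta$ if necessary so that these ranges are respected. The $W_j$ then still overlap at most boundedly many times, and this is the point that needs care.

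\emph{Step 2: Apply the corollary.} With $K=h$, the first inequality of Corollary \ref{cor:ordered-cones} gives
\[
\|G\|_3^3\ll_\eta h^{1+\eta}\sum_j\|G_j\|_3^3 .
\]
One checks that the proof of the corollary uses only $L^2$ finite overlap and bounded cone projections, and both hold for the $W_j$.

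\emph{Step 3: Bound each $\|G_j\|_3^3$.} Expanding the cube,
\[
\|G_j\|_3^3\le\int\bigl(|F_{X_j}|^2+|X_j|\,|\vp|^2\bigr)^3\ll\|F_{X_j}\|_6^6+|X_j|^3\|\vp\|_6^6 .
\]
Since $\|\vp\|_6=1$, the second term is $|X_j|^3$. By \eqref{J3-F}, the first term is $J_3(\gamma(X_j))$, and the diagonal solutions give $J_3(\gamma(X_j))\ge|X_j|^3$. So $\|G_j\|_3^3\ll\|F_{X_j}\|_6^6$.

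Summing over $j$ yields the claimed bound. The main obstacle is the finite-overlap claim in Step 1 for the cross chords between adjacent blocks, which are not covered by $E_j\cup E_{j+1}$.
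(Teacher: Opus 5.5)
Your proposal is correct and is essentially the paper's proof: you apply Corollary \ref{cor:ordered-cones} to the $G_j$ with cones built from $V_j\cup V_{j+1}$, then use $\|G_j\|_3\ll\|F_{X_j}\|_6^2+|X_j|\ll\|F_{X_j}\|_6^2$, where the last step comes from the diagonal bound $J_3(\gamma(X_j))\ge|X_j|^3$. Your caveat about the cross chords between $I_j$ and $I_{j+1}$ is a real point the paper passes over, and your fix via the chord-slope ranges of $X_j$ works: the slope interval of $X_j$ is disjoint from that of $X_{j'}$ whenever $|j-j'|\ge 2$, so after finitely many extra open conditions on $\delta$ the cones overlap at most twice.
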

\begin{proof}
    Let $G_j=|F_{X_j}|^2-|X_j|\cdot |\vp|^2$.
    Then by Lemma \ref{cone-for-difference} and the fact that $\delta$ is admissible for $X$ (in particular, item (3) in Definition \ref{admissible-def}), we know that there exist cones $\{V_j:j=1,\ldots ,h\}$ such that
    \begin{enumerate}
        \item $\supp (\wh G_j)\subset V_j\cup V_{j+1}$.
        \item $\{V_j\setminus\{0\}:j=1,\ldots ,h\}$ are pairwise disjoint.
    \end{enumerate}
    Let $P_j$ be a multiplier operator in $\ZR^2$ whose Fourier multiplier is the characteristic function of $V_j\cup V_{j+1}$, which is also a cone.
    Since $P_j G_j=G_j$, by the triangle inequality and Corollary \ref{cor:ordered-cones}, we have
    \begin{equation}
        \nonumber
        \Big\|\sum_{j=1}^h G_j\Big\|_3^3\ll   C_\eta\, h^{1+\eta}\sum_{j=1}^h\|G_j\|_3^3.
    \end{equation}
    Since $\|G_j\|_3\ll \|F_{X_j}\|_6^2+|X_j|\ll \|F_{X_j}\|_6^2$, this concludes the proof.
    \qedhere
    
\end{proof}

\medskip

Finally, we establish our second main estimate.

\begin{proposition}
    \label{iteration-prop}
    For every $\eta>0$, there exists a constant $C_\eta$ such that for every finite $X$ with $|X|=N$, and every $1\leq D\leq N$,
    \begin{equation}
        \nonumber
        T_X(D)\leq C_\eta\left(\frac{N}{D}\right)^{2+\eta}J(D).
    \end{equation}
    Consequently,
    \begin{equation}
        \label{second-key}
        T(N,D)\leq C_\eta\left(\frac{N}{D}\right)^{2+\eta}J(D).
    \end{equation}
\end{proposition}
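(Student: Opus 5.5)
The plan is to chain Lemma \ref{Sh-G-lem} with Proposition \ref{ortho-prop}, and then bound each localized sixth moment by the extremal quantity $J(\cdot)$ at scale $D$.

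First, fix $X$ with $|X|=N$ and $1\le D\le N$. Choose an admissible $\delta$ for $X$ as in Definition \ref{admissible-def} and build $F_Y$ as in \eqref{FY}. Lemma \ref{Sh-G-lem} gives
\[
T_X(D)\ll \int G^3\le \|G\|_3^3 .
\]
Here $G$ is real-valued, so $\int G^3\le\int|G|^3$. Proposition \ref{ortho-prop} then yields
\[
T_X(D)\ll C_\eta\, h^{1+\eta}\sum_{j=1}^h\|F_{X_j}\|_6^6 .
\]

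Second, note that $\delta$ is admissible for $X$, hence for every subset $X_j\subset X$. So \eqref{J3-F} applies with $Y=X_j$ and gives $\|F_{X_j}\|_6^6=J_3(\gamma(X_j))$. Each $X_j=I_j\cup I_{j+1}$ has at most $2D$ elements. Monotonicity in the set, together with the sup in Definition \ref{extremal-def}, bounds this by $J(2D)$.

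To pass from $J(2D)$ to $J(D)$, split $X_j$ into its two blocks. Each block has size at most $D$, so $J(\cdot)$ of a block is at most $J(D)$. The union inequality $J_3(P_1\cup P_2)\le (J_3(P_1)^{1/6}+J_3(P_2)^{1/6})^6$ from the proof of Corollary \ref{thm:maincircle} then gives $J_3(\gamma(X_j))\le 2^6 J(D)$. A block smaller than $D$ is handled by the same bound, since $J$ is nondecreasing (pad with extra points).

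Summing over the $h\le 2N/D$ indices gives
\[
T_X(D)\ll_\eta h^{2+\eta}J(D)\ll_\eta (N/D)^{2+\eta}J(D).
\]
Taking the supremum over $X$ yields \eqref{second-key}.

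I expect no real obstacle, since the analytic content already sits in Proposition \ref{ortho-prop}. The step that needs care is checking that admissibility of $\delta$ for $X$ passes to every subset $X_j$, so that \eqref{J3-F} holds with equality and no $\delta$-dependent loss appears. Items (1)–(3) of Definition \ref{admissible-def} are statements over all tuples from $\gamma(X)$, so they restrict to subsets directly.

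A secondary point is monotonicity of $J$ in $N$, which is needed when some $|X_j|<D$. Adding points only increases $J_3$, and we may add them far away without affecting anything, so $J$ is nondecreasing.
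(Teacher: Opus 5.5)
Your proposal is correct and follows the paper's proof: Lemma \ref{Sh-G-lem} and Proposition \ref{ortho-prop} give $T_X(D)\ll_\eta h^{1+\eta}\sum_j\|F_{X_j}\|_6^6$, and then \eqref{J3-F} together with the $L^6$ triangle inequality over the two blocks of $X_j$ (this is exactly your union inequality) bounds each term by $2^6J(D)$. Your explicit treatment of the shorter last block via monotonicity of $J$ is a detail the paper leaves implicit.
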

\begin{proof}
Note that $\#j=h\ll N/D$.
By Proposition \ref{ortho-prop} and Lemma \ref{Sh-G-lem}, 
\begin{equation}
    \nonumber
    T_X(D)\ll_\eta \left(\frac{N}{D}\right)^{1+\eta}\sum_j\|F_{X_j}\|_6^6.
\end{equation}
Since $|X_j|\leq 2D$, this proves the proposition by \eqref{J3-F} and the triangle inequality.
\qedhere
    
\end{proof}

\bigskip

\subsection{Proof of the main theorem}

Recall \eqref{first-key} and \eqref{second-key}, which give
\begin{equation}
    \label{first-key-2}
    J(N)\ll_\eta\frac{N^4}{D}+\left(\frac ND\right)^{3+\eta} J(D).
\end{equation}

\begin{proof}[Proof of Theorem \ref{thm:main}]
    
    Let us assume that for some $s>1$ we have
    \begin{equation}
        \label{ehdgefurgfhuir}
        J(N)\ll N^{2+s}
    \end{equation}
    for each $N\ge 1$ and each $\eta>0$. Then choosing $D\sim N^{\frac1s}$ in \eqref{first-key-2} gives, for each $N$
    \[
    J(N)\ll_\eta N^{4-\frac1{s}+\eta}.
    \]
    Since we know \eqref{ehdgefurgfhuir} holds for $s=2$, we may iterate this procedure to find successive values of $s$ that also work (at the expense of larger, but harmless implicit constants): $2\mapsto 3/2\mapsto 4/3\mapsto 5/4...$. 
    These values converge to 1, proving the theorem. 
    \qedhere
    
\end{proof}

\bigskip

\section{Applications to convex sequences}
\label{section-application}

Let us first introduce some notation for this section. For a finite set \(A\) in an additive group, we define the representation function
\[
r_{A+A} (x) = \# \left\{ (a_1,a_2) \in A ^{2} : x = a_1 + a_2  \right\},
\]
and we define \(r_{A-A} \), \(r_{A+A+A} \) similarly. We define the \(k\)-th additive energy of \(A\) as
\[
    E_{k}(A) = \sum _{x}  r_{A-A} (x)^{k}.
\]
A useful identity is
\begin{equation}
\label{eq:sum-E2}
    E_2(A) = \sum _{x} r_{A+A} (x) ^{2}
\end{equation}

\subsection{Auxiliary lemmas}

Note that for a finite convex sequence $A$, there is a strictly convex \(f : \mathbb{R} \to \mathbb{R} \) for which \(f(i) = a_{i} \).  

\begin{corollary}\label{cor:convex-sequence-j3}
    Let \(f: \mathbb{R} \to \mathbb{R} \) be strictly convex, and let \(X \subset  \mathbb{R} \) be finite.
    For any \(\e > 0\), we have
    \[
    J_3(f(X)) \ll _{\e}  \left\lvert X + X + X \right\rvert \left\lvert X \right\rvert ^{3 + \e}
    ,\]
    with the implicit constant independent of \(f\). 
    In particular, for every \(\e > 0\), there exists $C_\e>0$ such that for every finite convex sequence \(A\), we have
    \[
    J_3(A) \leq C_{\e} \left\lvert A \right\rvert ^{4 + \e}.
    \]
\end{corollary}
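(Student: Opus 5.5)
We want $J_3(f(X))$, the number of $6$-tuples in $X^6$ with $f(x_1)+f(x_2)+f(x_3)=f(x_4)+f(x_5)+f(x_6)$. This is a one-dimensional energy. The plan is to lift it to the two-dimensional energy $J_3(\gamma(X))$, with $\gamma(t)=(t,f(t))$, by splitting according to the value of the first-coordinate sums. Solutions of the one-dimensional equation do not need $x_1+x_2+x_3=x_4+x_5+x_6$. So for $s,s'\in X+X+X$ we set
\[
\nu(s,c)=\#\{(x_1,x_2,x_3)\in X^3: x_1+x_2+x_3=s,\ f(x_1)+f(x_2)+f(x_3)=c\},
\]
and write
\[
J_3(f(X))=\sum_c\Big(\sum_{s\in X+X+X}\nu(s,c)\Big)^2\le |X+X+X|\sum_{s}\sum_c \nu(s,c)^2
\]
by Cauchy--Schwarz in $s$. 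The inner double sum is exactly $\sum_{p}\nu(p)^2=J_3(\gamma(X))$, where $p=(s,c)$ runs over points of $\ZR^2$. Theorem \ref{thm:main} bounds this by $C_\e|X|^{3+\e}$ with a constant independent of $f$, which gives the first claim with a uniform constant.

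For the convex-sequence statement, write $A=\{a_1<\dots<a_N\}$ and take $X=\{1,\dots,N\}$. I would construct a strictly convex $f:\ZR\to\ZR$ with $f(i)=a_i$, as noted just before the corollary. One concrete choice is the piecewise linear interpolant, made strictly convex by adding $\epsilon'(t-i)(t-i-1)$ on each $[i,i+1]$ and extending quadratically beyond the endpoints. The convexity condition $a_{k+1}-a_k>a_k-a_{k-1}$ says the slopes increase strictly, so for small $\epsilon'$ this function is strictly convex. Then $f(X)=A$ and $J_3(f(X))=J_3(A)$, while $|X+X+X|=3N-2\le 3|A|$. Substituting into the first bound gives $J_3(A)\le C_\e |A|^{4+\e}$, and because Theorem \ref{thm:main} is uniform in $\gamma$, the constant does not depend on $A$.

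The main point to check is that the Cauchy--Schwarz step splits over the finitely many first-coordinate values $s\in X+X+X$ and recombines exactly into the two-dimensional energy. This holds because $\nu(s,c)$ is the representation function of the point $(s,c)$ in $\gamma(X)+\gamma(X)+\gamma(X)$. The only remaining detail is that the extension $f$ is strictly convex, which is routine.
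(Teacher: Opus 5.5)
Your proof is correct and is essentially the paper's: Cauchy--Schwarz in the first coordinate $s\in X+X+X$ recombines into $J_3(\gamma(X))$, and the convex-sequence case follows from $X=\{1,\dots,N\}$ with a strictly convex interpolant (your explicit interpolant is a detail the paper omits). One small correction: a strictly convex $f$ need not be injective on $X$ (e.g.\ $f(t)=t^2$, $X=\{-1,1\}$), and $J_3(f(X))$ counts solutions in $f(X)^6$ while your sum counts them in $X^6$, so your first display should read $\le$ rather than $=$, exactly as in the paper; this does not affect the bound.
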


\begin{proof}
    Call \(\gamma(x) = (x,f(x))\) and \(\Lambda = \gamma(X)\), and let us use the shorthand \(r = r_{\Lambda+\Lambda+\Lambda} \), and \(3X = X+X+X\). We note that
    \[
    J_3(\Lambda) = \sum _{p,q} r ((p,q))^{2}, \qquad J_3(f(X)) \leq \sum _{q} \Big( \sum _{p} r((p,q)) \Big) ^{2}
    .\]
    Note that \(r((p,q)) = 0\) if \(p \not  \in 3X\). 
    Expanding \(J_3(f(X))\) and applying Cauchy-Schwarz, 
    \begin{align*}
        J_3(f(X)) \leq& \sum _{p_1,p_2,q}  r((p_1,q))r((p_2,q)) 1_{3X} (p_1) 1_{3X} (p_2) \\
        \leq&\, \Big( \sum _{p_1,p_2,q} r((p_1,q))^{2} 1_{3X} (p_2) \Big) ^{\frac{1}{2} } \Big( \sum _{p_1,p_2,q} r((p_2,q))^{2} 1_{3X} (p_1) \Big) ^{\frac{1}{2} }\\
        = & \,
        \left\lvert X+X+X \right\rvert J_3(\Lambda).
    \end{align*} 
    The first part then follows by Theorem \ref{thm:main}, and the second part follows by letting \(f\) be such that \(f(i) = a_{i} \), and then applying the first part with \(X = \left\{ 1,2, \ldots , N \right\} \).
    \qedhere
    
\end{proof}

\medskip 

This result on \(J_3\) fits nicely into the existing convex sumset and difference set framework. 
We first record the following lemma, a special case of \cite[Lemma 4]{Rudnev-Stevens}. 
The auxiliary set \(B\) is introduced to account for the fact that both projections in \eqref{eq:projection-maps} produce a difference-set term. 
Without this lemma, the argument only gives
\[
|A+A|^{1/3}|A-A|^{2/3}
\gg_{\e}
|A|^{5/3-\e}.
\]
To obtain a bound involving only the sumset, we combine \eqref{eq:sum-E2} with the lemma below. 
This step is precisely what weakens the exponent from \(5/3\) 
to \(8/5\).

\begin{lemma}
    \label{lem:rudnev-stevens-input}
    Let \(A \subset \mathbb{R} \) be a sufficiently large finite set. 
    For any \(X \subset A\), define
    \[
    \sigma_X = |X| / (8 |X+X| \log|A|) , \qquad P(X) = \left\{ y  : r_{X+X} (y) \geq \sigma _X \cdot |X| \right\}, 
    \]
    \[
    R(X) = \{x \in X : \left\lvert (x + X)\cap P(X) \right\rvert \geq 3 |X| /4  \}
    .\]
    Then there exists \(B \subset A \) of size \(\left\lvert B \right\rvert \geq \left\lvert A \right\rvert  / 2\) for which
    \[
    E_2(B) \leq E_2(R(B)) \log \left\lvert A \right\rvert 
    .\]
\end{lemma}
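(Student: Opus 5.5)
This lemma is a special case of \cite[Lemma 4]{Rudnev-Stevens}. If we prove it directly, the plan is an iterative pruning argument driven by a potential function, namely the energy $E_2$. We build a decreasing chain $A=B_0\supset B_1\supset\cdots\supset B_m$. At each stage $X=B_i$ we compare $E_2(X)$ with $E_2(R(X))$. If $E_2(X)\le E_2(R(X))\log|A|$, we stop and take $B=X$. Otherwise $E_2(R(X))<E_2(X)/\log|A|$, and we set $B_{i+1}=X$ with a suitable bad set removed. The goal is to show two things: the process stops before $|B_i|$ drops below $|A|/2$, and it never reaches a degenerate set.

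The key step is to control what is removed when the stopping condition fails. We split $X=R(X)\sqcup(X\setminus R(X))$ and $E_2(X)=\sum_y r_{X+X}(y)^2$. We then split the sum according to whether $y\in P(X)$.

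\textbf{Contribution from $y\notin P(X)$.} Here $r_{X+X}(y)<\sigma_X|X|$, so this part is at most $\sigma_X|X|\cdot|X|^2$. Since $E_2(X)\ge |X|^4/|X+X|$ by Cauchy--Schwarz, this is at most $E_2(X)/(8\log|A|)$.

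\textbf{Contribution from $y\in P(X)$.} Pairs with both entries in $R(X)$ contribute roughly $E_2(R(X))$. Pairs involving some $x\notin R(X)$ are the ones to control. Each such $x$ has at least $|X|/4$ translates $x+x'$ falling outside $P(X)$, so in an incidence sense it is ``poor.''

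When the stopping condition fails, popular sums must come mostly from pairs touching $X\setminus R(X)$. The intended conclusion is that $X\setminus R(X)$ is large, of size $\gg|X|/\log|A|$. We then delete it and show $E_2$ decreases by a factor like $(1-c/\log|A|)$ while the set loses only a proportional amount. In the other direction we delete only elements whose removal costs little energy. The bookkeeping has to ensure the total removed over all steps is at most $|A|/2$. The natural form is a telescoping bound $\sum_i|B_i\setminus B_{i+1}|\le |A|/2$, obtained by tying each deletion to a drop in the normalized energy $E_2(B_i)/|B_i|^3$. That quantity lies between $1$ and $|A|$, so there are at most $O(\log|A|)$ substantial drops. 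The $8\log|A|$ in $\sigma_X$ is tuned to exactly this count.

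The main obstacle is choosing the potential and the deletion rule so that the failure of $E_2(X)\le E_2(R(X))\log|A|$ genuinely forces progress while costing few elements. Since the lemma is quoted from \cite{Rudnev-Stevens}, in practice I would verify that our parameters are its special case, with $k=2$ and threshold constants $8$ and $3/4$, and cite it rather than reprove it.
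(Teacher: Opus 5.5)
Citing \cite[Lemma 4]{Rudnev-Stevens} after checking the parameters is exactly what the paper does; it gives no argument beyond that citation. Your self-contained sketch, however, has a genuine gap, because it aims at the wrong inequality. To keep $|B|\ge |A|/2$ you need an \emph{upper} bound on what is deleted at each step. Instead you try to show that $X\setminus R(X)$ is \emph{large}, $\gg |X|/\log|A|$, and you expect $E_2$ to drop only by a factor $1-c/\log|A|$ per step. With those rates the energy potential forces termination only after about $\log^2|A|$ steps. By then, deleting a $\gg 1/\log|A|$ fraction per step would have exhausted the set, so the telescoping bookkeeping cannot close. Splitting $E_2(X)$ according to whether $y\in P(X)$ does not help here.

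The missing observation is a first-moment count, valid for every $X$ regardless of any stopping condition. The number of pairs $(x,x')\in X^2$ with $x+x'\notin P(X)$ is at most $|X+X|\cdot\sigma_X|X|=|X|^2/(8\log|A|)$. On the other hand, each $x\in X\setminus R(X)$ has more than $|X|/4$ partners $x'$ with $x+x'\notin P(X)$. Hence $|X\setminus R(X)|\le |X|/(2\log|A|)$; this is what the threshold $\sigma_X$ is tuned for.

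Now iterate $B_0=A$ and $B_{i+1}=R(B_i)$ as long as $E_2(B_i)>E_2(R(B_i))\log|A|$. The failed stopping condition itself gives the strong drop $E_2(B_{i+1})<E_2(B_i)/\log|A|$, so $E_2(B_i)<|A|^3(\log|A|)^{-i}$. Bernoulli's inequality gives $|B_i|\ge(1-\frac{i}{2\log|A|})|A|\ge|A|/2$ for $i\le\log|A|$. Set $k:=\lfloor\log|A|\rfloor$ and suppose the condition failed at every $i<k$. Since $E_2(B)\ge r_{B-B}(0)^2=|B|^2$, we get $|A|^2/4\le E_2(B_k)<|A|^3(\log|A|)^{-k}$, i.e.\ $(\log|A|)^k<4|A|$, which is false for large $|A|$. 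So the process halts at some $B=B_i$ with $|B|\ge|A|/2$ and $E_2(B)\le E_2(R(B))\log|A|$.
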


With this lemma, we recreate the projection lower bounds of \cite{Cushman-sumproduct}.

\begin{lemma}\label{lem:projection-estimate}
    Let \(A \subset \mathbb{R} \) be a sufficiently large finite set. Then
    \[
    \left\lvert A \right\rvert ^{12} \ll \left\lvert A-A \right\rvert ^{3}  \cdot E_3(A) \cdot  J_3(A)
    .\]
    Moreover, there exists \(B \subset A\) with \(\left\lvert B \right\rvert \geq \left\lvert A \right\rvert / 2\) such that for any \(\e > 0\)
    \[
    E_2(B) ^{3} \left\lvert A \right\rvert ^{6 - \e} \ll _{\e} \left\lvert A+A \right\rvert ^{2} \cdot E_3(A) ^{2} \cdot J_3(A)
    .\]
\end{lemma}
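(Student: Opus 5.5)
The plan is to obtain both inequalities from one double-counting identity followed by Cauchy--Schwarz. The additive collisions will be controlled by $J_3(A)$, and the lower bound will come from a projection onto difference (or sum) data.

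\textbf{Setup.} Write $A=\{a_1<\cdots<a_N\}$ and $A=f([N])$ with $f$ strictly convex, and let $\Lambda=\gamma([N])$. I will use two facts.
\begin{itemize}
\item Replacing $\lambda_3\leftrightarrow\lambda_6$ shows that $J_3$ also counts solutions of $\lambda_1+\lambda_2-\lambda_3=\lambda_4+\lambda_5-\lambda_6$, and likewise for the scalar set $A$.
\item Convexity gives the elementary projection facts used in \cite{Cushman-sumproduct}. Lemma~\ref{lem:pair-sum} says that a difference value $x=a_i-a_j$ with prescribed index gap $i-j$ is represented at most once. Hence the fibres of the map $(i,j)\mapsto a_i-a_j$ are injectively parametrized by index gaps.
\end{itemize}

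\textbf{First inequality.} I would count the $N^6$ six-tuples $(i_1,\dots,i_6)$ by grouping them according to the triple of differences $(x_1,x_2,x_3)=(a_{i_1}-a_{i_2},\,a_{i_3}-a_{i_4},\,a_{i_5}-a_{i_6})\in(A-A)^3$. This gives
\[
N^6=\sum_{x_1,x_2,x_3}r_{A-A}(x_1)r_{A-A}(x_2)r_{A-A}(x_3).
\]
Next I would project onto a single scalar coordinate, for instance $w=x_1+x_2+x_3$ together with the residual freedom inside each fibre. Cauchy--Schwarz then splits the sum into two factors. The diagonal factor counts pairs of six-tuples with equal projection, and these are exactly solutions of $a_{i_1}+a_{i_3}+a_{i_5}=a_{i_2}+a_{i_4}+a_{i_6}$ type, so that factor is bounded by $J_3(A)$. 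The remaining factor counts the support of the projection weighted by the representation functions. Applying H\"older with exponents $(3,3,3)$ in the $x_k$ variables, it is bounded by $|A-A|^{3}E_3(A)$: each $x_k$ is summed over $A-A$, and $\sum r^3=E_3(A)$ enters once after normalizing. Squaring gives $|A|^{12}\ll|A-A|^3E_3(A)J_3(A)$.

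\textbf{Second inequality.} I would apply Lemma~\ref{lem:rudnev-stevens-input} to obtain $B$ with $E_2(B)\le E_2(R(B))\log|A|$. For $x\in R(B)$, three quarters of the shifts $x+b$ land in the popular-sum set $P(B)$, and each $y\in P(B)$ has at least $\sigma_B|B|$ representations in $B+B\subset A+A$.

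The idea is to repeat the first argument with sum data in place of difference data. By \eqref{eq:sum-E2}, $E_2(R(B))=\sum_y r_{R(B)+R(B)}(y)^2$, and the popularity of $P(B)$ lets me replace one factor $r_{A-A}$ by $r_{A+A}$. This costs a factor $|B+B|\log|A|/|B|$, which is absorbed into $|A|^{-\e}$ after using $|B+B|\le|A+A|$. Running the same Cauchy--Schwarz and H\"older, with two sum-projections (hence $|A+A|^2$) and one $E_3$-weighted difference projection used twice, should give
\[
E_2(B)^3|A|^{6-\e}\ll_\e|A+A|^2E_3(A)^2J_3(A).
\]

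\textbf{Main obstacle.} The delicate step is choosing the projection so that the diagonal Cauchy--Schwarz term is exactly $J_3(A)$ and not a mixed energy, while the off-diagonal term factors into the $|A-A|^3E_3$ shape. I would first try the projection that remembers $(x_1,x_2,x_3)$ up to their sum, and check the exponent bookkeeping against the claimed $5/3$ and $8/5$ consequences before committing.
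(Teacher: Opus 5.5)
Your argument for the first inequality does not work, and the obstacle you flag at the end is exactly the gap. With the projection $w=x_1+x_2+x_3$, Cauchy--Schwarz gives $|A|^{12}\le|3A-3A|\cdot\sum_w s(w)^2$, where $s(w)$ counts six-tuples with that value of $w$. Pairs of six-tuples with equal $w$ are twelve-tuples with $a_{i_1}+a_{i_3}+a_{i_5}+a_{j_2}+a_{j_4}+a_{j_6}=a_{i_2}+a_{i_4}+a_{i_6}+a_{j_1}+a_{j_3}+a_{j_5}$, so the diagonal term is $J_6(A)$, not $J_3(A)$. Likewise, H\"older on $\sum_x r_{A-A}(x)$ only yields $|A|^6\le|A-A|^2E_3(A)$, not an image bound of the form $|A-A|^3E_3(A)$.

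In the paper the two energies play the opposite roles. One works with triples $(r,a,a')\in A^3$ and $\pi_-(r,a,a')=(r-a,r-a')$. Two triples collide iff $r_1-r_2=a_1-a_2=a_1'-a_2'$, so $|Z|^2\le E_3(A)|\pi_-(Z)|$, and $J_3(A)$ enters through the size of the \emph{image}. The missing idea is popularity. Take $\de=|A|/(11|A-A|)$ and $P=\{x:r_{A-A}(x)\ge\de|A|\}$, and let $R$ be the set of $r\in A$ with many $a$ satisfying $r-a\in P$. Then $Z_-=\{(r,a,a')\in R\times A^2:\ r-a,\,r-a',\,a'-a\in P\}$ has $|Z_-|\gg|A|^3$. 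Its image lies in $\{(p_1,p_2)\in P^2:p_1-p_2\in P\}$, and since each element of $P$ has at least $\de|A|$ representations, $(\de|A|)^3|\pi_-(Z_-)|\le J_3(A)$. The factor $|A-A|^3$ is exactly $(\de|A|)^{-3}|A|^6$ up to constants. Without restricting to popular differences there is no way to bound additive triples in $A-A$ by $J_3(A)$ with that loss.

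The second inequality is only sketched, and the sketch contains a concrete error. The factor $\sigma_B^{-1}=8|B+B|\log|A|/|B|$ is not absorbed into $|A|^{\e}$; it is the source of $|A+A|^2$, through the requirement that the two sums $r+b,\,r'+b$ lie in $P(B)$. Only powers of $\log|A|$ are absorbed. Besides Lemma~\ref{lem:rudnev-stevens-input}, the argument needs the following ingredients:
\begin{enumerate}
\item a dyadic pigeonholing $E_2(B)\ll(\log|A|)^2\De^2|P_\De|$ with $P_\De=\{x:r_{R(B)-R(B)}(x)\in[\De,2\De)\}$;
\item the set $Z_+$ of $(r,r',b)\in R(B)^2\times B$ with $r'-r\in P_\De$ and $r+b,r'+b\in P(B)$, of size $\gg\De|P_\De||B|$;
\item the bound $|Z_+|^2\le E_3(A)|\pi_+(Z_+)|$ for $\pi_+(r,r',b)=(r+b,r'+b)$;
\item the popularity bound $(\sigma_B|B|)^2\De|\pi_+(Z_+)|\ll J_3(A)$;
\item the interpolation with $\De^3|P_\De|\le E_3(A)$, which supplies the second factor $E_3(A)$.
\end{enumerate}
Finally, the lemma is stated for an arbitrary finite $A\subset\R$. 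Convexity and Lemma~\ref{lem:pair-sum} play no role, so you should not assume $A=f([N])$.
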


\begin{proof}
    We first introduce the maps
    \begin{equation}
    \label{eq:projection-maps}
        \pi_{-} (r,a,a') = (r-a, r-a') , \qquad \pi_{+} (r,r',b) = (r+b , r'+b).
    \end{equation}
    If \(Z \subset A ^{3} \), then Cauchy-Schwarz gives
    \begin{equation} 
    \label{eq:cauchy-schwarz-projection}
        \left\lvert Z \right\rvert ^{2} \leq E_3(A) \left\lvert \pi_{\pm } (Z) \right\rvert .
    \end{equation}
    Indeed, by Cauchy-Schwarz
    \[
    \left\lvert Z \right\rvert \leq  \left\lvert \pi_{\pm } (Z) \right\rvert ^{\frac{1}{2} } \# \left\{ (z_1,z_2) \in Z ^{2} : \pi_{\pm } (z_1) = \pi _{\pm } (z_2) \right\} ^{\frac{1}{2} }
    ,\]
    and for either \(\pi _{\pm } \) the rightmost term is bounded by
    \[
    E_3(A) = \# \left\{ (a_{i} )\in A^{6}: a_1 - a_2 = a_3 - a_4 = a_5 - a_6 \right\} 
    .\]
    The proof of the lemma follows by two applications of this inequality.
    
    \bigskip  
    
    For the difference set estimate, put \(\delta = \left\lvert A \right\rvert / (11 \left\lvert A-A \right\rvert )\). We define
    \[
    P = \left\{ x : r_{A-A} (x) \geq \delta \left\lvert A \right\rvert  \right\} , \qquad R = \left\{ r \in A : \left\lvert \left( r-A \right) \cap P \right\rvert  \geq 2 \left\lvert A \right\rvert / \sqrt{11} \right\} 
    .\]
    By the definition of \(P\), we have \(\sum _{x \in P} r_{A-A} (x) \geq 10 \left\lvert A \right\rvert ^{2} / 11\), and hence partitioning this sum by the first coordinate gives \(\left\lvert R \right\rvert \gg \left\lvert A \right\rvert \). 
    For each \(r \in R\), there are \(\geq 4 \left\lvert A \right\rvert ^{2} / 11\) pairs \((a,a') \in A ^{2}\) with \(r-a,r-a' \in P\). 
    By the definition of \(P\), there are also \(\geq 10 \left\lvert A \right\rvert ^{2} / 11\) pairs \((a,a')\) with \(a'-a \in P\), and hence by inclusion-exclusion and the fact that \(|R| \gg |A|\),
    \[
    Z_{-}  = \left\{ (r,a,a') \in R \times  A^{2} : r-a,r-a',a'-a \in P \right\} 
    \]
    satisfies \(\left\lvert Z_{-}  \right\rvert \gg \left\lvert A \right\rvert ^{3} \). 
    Note that 
    \[
    \left\lvert \pi_{-} (Z_{-} ) \right\rvert \leq \# \left\{ p_1 - p_2 = p_3 : p_{i} \in P \right\} 
    ,\]
    and since all \(p \in P\) have \(\geq \delta \left\lvert A \right\rvert \) representations as a difference, we have
    \[
    \left( \delta \left\lvert A \right\rvert  \right) ^{3}  \left\lvert \pi_{-} (Z_{-} ) \right\rvert \leq  \# \{ (a_1 - a_2) - (a_3 - a_4) = (a_5 - a_6) : a_i \in A  \}= J_3(A)
    .\]
    Applying (\ref{eq:cauchy-schwarz-projection}) and using \(|Z_-| \gg |A|^3\) gives
    \[
    \left\lvert A \right\rvert ^{6} \ll \left\lvert Z_{-}  \right\rvert ^{2} \leq E_3(A) \left\lvert \pi_{-} (Z_{-} ) \right\rvert \ll \left( \delta \left\lvert A \right\rvert \right) ^{-3 } E_3(A) J_3(A).
    \]
    Recalling \(\de = |A| / (11 |A-A| )\), we prove the first estimate.
    
    \bigskip
    For the sumset, we begin by choosing \(B \subset A\) as in Lemma \ref{lem:rudnev-stevens-input}. Dyadic pigeonholing for \(E_2(R(B))\) gives \(\Delta \geq 1\) such that, letting 
    \[
    P_{\Delta} = \left\{ x : r_{R(B) - R(B)} (x) \in [\Delta, 2 \Delta) \right\},
    \]
    we have
    \begin{equation} 
    \label{eq:choice-of-dyadic}
        E_2(B) \leq \log \left\lvert A \right\rvert E_2(R(B)) \ll \left( \log \left\lvert A \right\rvert  \right) ^{2} \Delta ^{2} \left\lvert P_{\Delta} \right\rvert  .
    \end{equation}
    Note that there are \(\gg \De |P_{\De}|\) pairs \((r,r') \in R(B) ^2\) with \(r'-r \in P_{\De}\). 
    For each such pair, by the definition of \(R(B)\) and inclusion-exclusion, there are \(\geq |B| /2\) many \(b \in B\) with \(r + b, r' + b \in P(B)\). 
    Therefore, \(\left\lvert Z_{+}  \right\rvert \gg \Delta \left\lvert P_{\Delta}  \right\rvert  \left\lvert B \right\rvert \), where
    \[
        Z_{+} = \left\{ (r,r',b)\in R(B)^{2}\times B : r+b,r'+b \in P(B), r'-r \in P_{\Delta}  \right\} 
    .\]
    If \((p_1,p_2) \in \pi_{+} (Z_{+} )\), then \(p_1,p_2 \in P(B)\), \(p_2-p_1 \in P_{\Delta} \). As before, expanding all of these into sums and differences in \(A\) we obtain 
    \[
        \left( \sigma_{B} \left\lvert B \right\rvert  \right) ^{2} \Delta \left\lvert \pi_{+} (Z_{+} )  \right\rvert \ll \# \left\{ (a_1 + a_2) - (a_3 + a_4) = (a_5 - a_6) : a_{i}\in A \right\} = J_3(A)
    .\]
    Applying (\ref{eq:cauchy-schwarz-projection}) and using \(|Z_+| \gg \De |P_{\De}|\, |B|\) gives
    \begin{equation*}
        \Delta ^{2} \left\lvert P_{\Delta} \right\rvert ^{2} \left\lvert B \right\rvert ^{2} \ll \left\lvert Z_{+}  \right\rvert ^{2} \leq E_3(A) \left\lvert \pi_{+} (Z_{+} ) \right\rvert \ll \Delta ^{-1} (\sigma _B |B| ) ^{-2} E_3(A) J_3(A).
    \end{equation*}
    
    We interpolate this with  $\Delta ^{3} \left\lvert P_{\Delta}  \right\rvert \leq E_3(A)$, obtaining 
    \[
    \De ^6 |P_{\De}|^3 |B| ^2 \ll  (\sigma _B |B|)^{-2} E_3(A)^2 J_3(A)
    .\]
    Finally, use (\ref{eq:choice-of-dyadic}) and recall that \(\sigma_B = |B| / (8 |B+B| \log |A|)\) to obtain
    \[
    E_2(B) ^{3} \left\lvert B \right\rvert ^{6} \ll \left\lvert B+B \right\rvert ^{2} \left( \log \left\lvert A \right\rvert  \right) ^{8} E_3(A)^{2} J_3(A)
    .\]
    The desired result follows upon using \(\left\lvert B+B \right\rvert \leq \left\lvert A+A \right\rvert \), \(\left\lvert B \right\rvert \gg \left\lvert A \right\rvert \), and suppressing powers of \(\log \left\lvert A \right\rvert \) by \(\left\lvert A \right\rvert ^{\e}\).
\end{proof}

We also use a classical consequence of the Szemer\'edi-Trotter incidence theorem for translations of one convex curve.
See \cite[Corollary 5]{Schoen-Shkredov}
\begin{proposition}\label{prop:sharp-e3}
Let \(A\) be a finite convex sequence. 
Then
\[
    E_3(A) \ll \left\lvert A \right\rvert ^{3}  \log \left\lvert A \right\rvert.
\]
\end{proposition}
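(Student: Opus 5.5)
Since $E_3(A)=\sum_x r_{A-A}(x)^3$, I will bound it by dyadically decomposing the popular differences and bounding each level with a Szemer\'edi--Trotter type incidence count. For $\tau\ge 1$ dyadic, let $P_\tau=\{x: r_{A-A}(x)\ge \tau\}$. Then
\[
E_3(A)\ll \sum_{\tau\ \mathrm{dyadic}} \tau^3 |P_\tau|.
\]
The whole proof reduces to the single estimate
\[
|P_\tau|\ll |A|^3/\tau^3 .
\]
Given it, each dyadic level contributes $O(|A|^3)$. There are $O(\log|A|)$ levels because $\tau\le |A|$, and this produces the stated bound with its logarithmic loss.

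To prove $|P_\tau|\ll |A|^3/\tau^3$, I would use the convex function $f$ with $f(i)=a_i$. Then $A=f([N])$ with $N=|A|$. A difference $x=a_i-a_j$ with $r_{A-A}(x)\ge\tau$ means the horizontal line $y=x$ meets the point set $\{(i-j, a_i-a_j)\}$ in at least $\tau$ points.

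The standard device, as in Schoen--Shkredov and Elekes--Nathanson--Ruzsa, is to reformulate this as incidences between a point set and translates of a single convex curve. Consider the curves
\[
\Gamma_{j}=\{(t, f(t)-a_j)\}=\gamma-(0,a_j), \qquad j\in[N].
\]
Shift them further horizontally by integers $s$ with $|s|\le N$, giving the family $\{\gamma+(s,-a_j)\}$ of size $N^2$. Each $x\in P_\tau$ should be turned into many points lying on many of these curves. The cleanest version: for $x\in P_\tau$ and each $k\in[N]$, the point $(k, x+a_k)$ lies on the curve $\gamma+(k-i,\,x-a_i+a_k-f(k))$. I would instead use the symmetric form of the points $(i, a_i - x)$.

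Concretely, I take the point set $\mathcal P=[N]\times(A-P_\tau)$ of size at most $N^2|P_\tau|$. I take the curves $\{\gamma + (s, c)\}$ with $s\in[-N,N]$ and $c$ chosen so that the curve passes through the points $(i+s, a_{i}-x)$. Each point of the form $(m, a_m - x)$ with $m-s\in[N]$ then lies on the curve shifted by $(s,-x)$. This gives $\gtrsim N|P_\tau|$ curves, each with roughly $\tau$ incidences, against a point set $\{(m, a_j)\}$ of size $N^2$.

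Translates of a fixed strictly convex curve pairwise meet in at most two points, and two points determine $O(1)$ such translates (Lemma~\ref{lem:pair-sum}). The Szemer\'edi--Trotter bound for rich curves (e.g.\ \cite[Theorem 6]{Szekely}) then says that the number of curves with $\ge\tau$ incidences is
\[
\ll |\mathcal P|^2/\tau^3+|\mathcal P|/\tau .
\]
With $|\mathcal P|\sim N^2$ this is $N^4/\tau^3 + N^2/\tau$. Dividing by the multiplicity $N$, since each $x$ is counted through $\sim N$ values of $s$, gives $|P_\tau|\ll N^3/\tau^3 + N/\tau$. The second term is absorbed because $\tau\le N$.

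The main obstacle is setting up this incidence configuration correctly. The translation parameter $s$ must genuinely produce $\sim N$ distinct rich curves per popular difference $x$, while the point set stays of size $O(N^2)$. If this bookkeeping fails, I would fall back on citing \cite[Corollary 5]{Schoen-Shkredov} directly, as the paper indicates.
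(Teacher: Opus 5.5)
Your proposal is correct and is the standard argument behind \cite[Corollary 5]{Schoen-Shkredov}, which the paper cites without proof: Szemer\'edi--Trotter for translates of a convex curve gives $|\{x: r_{A-A}(x)\ge\tau\}|\ll |A|^3/\tau^3$, and summing over dyadic $\tau\le |A|$ produces the logarithm. The bookkeeping you were unsure about does close, once you discard the intermediate set $[N]\times(A-P_\tau)$ (far too large) and correct the point $(m,a_m-x)$ to $(m,a_{m-s}-x)$: each of the $(2N+1)|P_\tau|$ distinct curves $\gamma+(s,-x)$ with $|s|\le N$ and $x\in P_\tau$ contains the $r_{A-A}(x)\ge\tau$ distinct points $(i+s,a_i-x)$, where $a_i-x\in A$, and all these points lie in the $3N^2$-point set $\{(m,a_j): m\in[1-N,2N],\ j\in[N]\}$.
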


\medskip 

\subsection{Proof of Theorem \ref{difference-set-thm}}

Fix \(\e > 0\) and note that by choosing \(c_{\e} \) as a sufficiently small constant, we can account for those \(A\) which are not sufficiently large for Lemma \ref{lem:projection-estimate}. Thus, we may assume that \(A\) is large enough for Lemma \ref{lem:projection-estimate} to hold.

By Lemma \ref{lem:projection-estimate} we have
\[
\left\lvert A \right\rvert ^{12} \ll \left\lvert A-A \right\rvert ^{3}  \cdot E_3(A) \cdot J_3(A)
.\]
Using Corollary \ref{cor:convex-sequence-j3} and Proposition \ref{prop:sharp-e3}, we obtain
\[
\left\lvert A-A \right\rvert \gg _{\e} \left\lvert A \right\rvert ^{5 /3 - \e}
.\]

By Lemma \ref{lem:projection-estimate} again, there is \(B \subset A\) of size \(\left\lvert B \right\rvert \geq \left\lvert A \right\rvert / 2\) for which
\[
E_2(B)^{3} \left\lvert A \right\rvert ^{6 - \e}\ll _{\e}  \left\lvert A+A \right\rvert ^{2} \cdot E_3(A) ^{2} \cdot J_3(A)
.\]
By Cauchy-Schwarz, we have
\[
\left\lvert A \right\rvert ^{2} \ll \left\lvert B \right\rvert ^{2} = \sum _{x} r_{B + B} (x) \leq \left\lvert B+B \right\rvert ^{\frac{1}{2} } E_2(B) ^{\frac{1}{2} } \leq \left\lvert A+A \right\rvert ^{\frac{1}{2} } E_2(B) ^{\frac{1}{2} }
,\]
which combined with \eqref{eq:sum-E2}, Corollary \ref{cor:convex-sequence-j3}, Proposition \ref{prop:sharp-e3} gives the desired
\[
\left\lvert A + A \right\rvert \gg _{\e}  \left\lvert A \right\rvert ^{8 / 5 - \e}. 
\]

\bigskip

\bibliographystyle{alpha}
\bibliography{bibli}

\end{document}